\theoremstyle{thmstyleone}%
\newtheorem{theorem}{Theorem}[section]
\newtheorem{proposition}{Proposition}[section]%
\newtheorem{lemma}{Lemma}[section]
\theoremstyle{thmstyletwo}%
\newtheorem{remark}{Remark}[section]%
\theoremstyle{thmstylethree}%
\newtheorem{definition}{Definition}[section]%
\newcommand*{\csch}{\text{csch}}
\newcommand*{\tildeG}{{L}}
\begin{document}

\title[Generalizations of the fractional Fourier transform]{Generalizations of the fractional Fourier transform and their analytic properties}

\author*[1]{\fnm{Yue} \sur{Zhou}} 
\email{zhouy583@mail2.sysu.edu.cn}

\affil*[1]{\orgdiv{Department of Mathematics}, \orgname{Sun Yat-sen University}, \orgaddress{\city{Guangzhou}, \postcode{510275}, \state{Guangdong}, \country{P.R. China}}}

\abstract{We consider one-parameter families of quadratic-phase integral transforms which generalize the fractional Fourier transform. 
Under suitable regularity assumptions, we characterize the one-parameter groups formed by such transforms. 
Necessary and sufficient conditions for continuous dependence on the parameter are obtained in \unboldmath{$L^2$}, pointwise, and almost-everywhere senses.}

\keywords{fractional Fourier transform, linear canonical transform, quadratic phase, maximal function}

\pacs[MSC Classification]{42B10,42B20,42B25}

\maketitle

\section{Introduction} 
The idea of the \textit{fractional Fourier transform} (FRFT) goes back to Wiener in 1929. 
In the last few decades, the FRFT has found practical applications in 
optics \cite{FRFToptics} and signal processing \cite{FRFTsignal}, among many others. 

As a one-parameter extension of the Fourier transform 
$$\mathscr F f(u)
=\hat{f}(u):=\frac{1}{\sqrt{2\pi}}\int_\mathbb{R} e^{-iut} f(t) dt\quad (u\in\mathbb R),$$ 
the FRFT of order $\alpha\in\mathbb R$ is defined by 

\begin{equation}
\label{eq:FRFT}
    \mathscr F_\alpha f(u)=
    \sqrt{\frac{1-i\cot \alpha}{2\pi}}
    \int_{\mathbb{R}} e^{i[\frac{1}{2}(\cot\alpha) u^2-(\csc\alpha) ut+\frac{1}{2}(\cot\alpha) t^2]} f(t) dt
\end{equation}
\newpage
\noindent for $\alpha\notin\mathbb Z\pi$, 
and $\mathscr F_{n \pi} f(u)=f((-1)^n u)$
for $n\in\mathbb Z$ (\cite{FRFTNamias}, \cite{FRFTrectify}, \cite{Almeida1994}). 
Clearly, 
$$\mathscr F_{\frac\pi2} f=\mathscr F f.$$ 
It is shown in \cite{FRFTNamias} (see also \cite{FRFTrectify}, \cite{Almeida1994}) that $\mathscr F_\alpha$ satisfies Plancherel's theorem
$$\|\mathscr F_\alpha f\|_{L^2(\mathbb R)}=\|f\|_{L^2(\mathbb R)}$$
and that $\mathscr  F_\alpha f\in L^2(\mathbb R)$ depends continuously on $\alpha$ when $f\in L^2(\mathbb R)$; 
moreover, 
$\{\mathscr F_\alpha\}$ forms a \textit{one-parameter group}, 
i.e., it satisfies $\mathscr F_0={I}$ (identity) and 
\begin{equation}\label{eq:group}
\mathscr F_\alpha\circ\mathscr F_\beta=\mathscr F_{\alpha+\beta}\quad(\alpha,\beta\in\mathbb R).
\end{equation} 

More generally, 
$\{\mathscr F_\alpha\}$ is a subgroup of a three-parameter group of unitary transformations on $L^2(\mathbb R)$ called the \textit{linear canonical transforms} (LCTs; \cite{Moshinsky1971}, \cite[Ch.~9]{AlgebraicSupplement}). 
Up to suitable completion, 
the LCTs take the form 
\begin{equation}
\label{eq:LCT} 
    \mathscr L_{A,B,C}f(u)=
    D\cdot\int_{\mathbb{R}} e^{i[\frac{1}{2}A u^2-B ut+\frac{1}{2}C t^2]} f(t) dt, 
\end{equation} 
where $A, B, C\in\mathbb R$, 
$B\neq0$, 
and $D\in\mathbb C$ is an appropriate constant with 
$|D|=\sqrt{\frac{|B|}{2\pi}}$ (whose precise value will be omitted). 
For simplicity, we will denote $\mathscr L_{A,B,C}$ by the triple $[A,B,C]$. Thus, for example, 
\begin{equation}
\label{eq:F_alpha}
\mathscr F_\alpha=[\cot\alpha,\csc\alpha,\cot\alpha].
\end{equation}
Besides $\{\mathscr F_\alpha\}$, 
the LCTs contain two other one-parameter subgroups: 
\begin{align}
\mathscr E_\alpha
&:=\Big[\frac1\alpha,\frac1\alpha,\frac1\alpha\Big],\label{eq:E_alpha}\\
\mathscr G_\alpha
&:=[\coth\alpha,\csch\alpha,\coth\alpha].\label{eq:H_alpha}
\end{align}

It is easy to see that if $[A(\alpha),B(\alpha),C(\alpha)]$ ($\alpha\in\mathbb R$) is a one-parameter subgroup of the LCTs, then so are the following (where $\omega, \lambda, \gamma\in\mathbb R$ are arbitrary fixed constants): 
\begin{align}
&[A(\omega\alpha),B(\omega\alpha),C(\omega\alpha)],\label{eq:omega}\\
&\Big[\frac1\lambda A(\alpha),\frac1\lambda B(\alpha),\frac1\lambda C(\alpha)\Big],\label{eq:lambda}\\
&[A(\alpha)-\gamma,B(\alpha),C(\alpha)+\gamma].\label{eq:gamma}
\end{align}

The first objective of this note is to show that, 
up to the transformations \eqref{eq:omega}--\eqref{eq:gamma} 
and under suitable regularity assumptions on the functions $A(\alpha)$, $B(\alpha)$, and $C(\alpha)$, 
$\{\mathscr F_\alpha\}$, $\{\mathscr E_\alpha\}$, and $\{\mathscr G_\alpha\}$ give all one-parameter subgroups of the LCTs. 

\begin{theorem}
\label{thm:LCT}
Let $\mathscr L_\alpha=[A(\alpha),B(\alpha),C(\alpha)]$ $(\alpha\in\mathbb R)$ be a one-parameter subgroup of the LCTs, satisfying \eqref{eq:cond-1} and \eqref{eq:cond-2}. Then $\mathscr L_\alpha$ takes one of the following forms: 
\begin{equation}
\label{eq:thm1}
\begin{cases}
(I)\quad\Big[\frac1\lambda\cot(\omega\alpha)-\gamma,\;\frac1\lambda\csc(\omega\alpha),\;\frac1\lambda\cot(\omega\alpha)+\gamma\Big]\quad (\alpha\in\mathbb R),\\[1em]
(II)\quad\Big[\frac{1}{\lambda\alpha}-\gamma,\;\frac{1}{\lambda\alpha},\;\frac{1}{\lambda\alpha}+\gamma\Big]\quad (\alpha\in\mathbb R),\\[1em]
(III)\quad\Big[\frac1\lambda\coth(\omega\alpha)-\gamma,\;\frac1\lambda\emph{csch}(\omega\alpha),\;\frac1\lambda\coth(\omega\alpha)+\gamma\Big]\quad (\alpha\in\mathbb R), 
\end{cases}
\end{equation}
for some constants $\omega, \lambda, \gamma\in\mathbb R$. 
\end{theorem}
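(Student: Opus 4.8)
\emph{Plan and Step 1 (matrix dictionary).} The plan is to linearize everything through the classical identification of the LCTs with a cover of $SL(2,\mathbb R)$. To a triple $[A,B,C]$ attach
$$\Phi[A,B,C]:=\tfrac1B\begin{pmatrix}C & 1\\ AC-B^2 & A\end{pmatrix}\in SL(2,\mathbb R);$$
this is injective, with image $\{M\in SL(2,\mathbb R):M_{12}\ne0\}$. Evaluating the composition $[A_1,B_1,C_1]\circ[A_2,B_2,C_2]$ by a Gaussian (Fresnel) integral in the intermediate variable shows that $\Phi$ carries the composed triple to the matrix product $\Phi[A_1,B_1,C_1]\,\Phi[A_2,B_2,C_2]$; this is the well-known fact that the completed LCTs realize the metaplectic-type group over $SL(2,\mathbb R)$ (cf.\ \cite{Moshinsky1971}). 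Under $\Phi$ the three model families become the one-parameter subgroups $\exp(\alpha J)$, $\exp(\alpha N)$, $\exp(\alpha K)$, where
$$J=\begin{pmatrix}0&1\\-1&0\end{pmatrix},\qquad N=\begin{pmatrix}0&1\\0&0\end{pmatrix},\qquad K=\begin{pmatrix}0&1\\1&0\end{pmatrix},$$
and a direct check identifies \eqref{eq:omega}, \eqref{eq:lambda}, \eqref{eq:gamma} on the matrix side with, respectively, the reparametrization $M(\alpha)\mapsto M(\omega\alpha)$, conjugation by $\operatorname{diag}(\lambda,1)$, and conjugation by the lower-triangular unipotent with subdiagonal entry $-\gamma$. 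Together these generate a group $\mathcal T$: reparametrizations composed with conjugations by arbitrary invertible lower-triangular matrices.

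\emph{Step 2 (the generator).} For all $\alpha$ outside a discrete set the triple $[A(\alpha),B(\alpha),C(\alpha)]$ is nondegenerate; put $M(\alpha):=\Phi[A(\alpha),B(\alpha),C(\alpha)]$ there, and use the completed correspondence at the exceptional parameters. The one-parameter-group hypothesis \eqref{eq:group} and Step 1 give $M(\alpha+\beta)=M(\alpha)M(\beta)$ with $M(0)=I$, and the regularity conditions \eqref{eq:cond-1}--\eqref{eq:cond-2} make $\alpha\mapsto M(\alpha)$ at least measurable on $\mathbb R$; by automatic smoothness of measurable one-parameter subgroups of Lie groups, $M(\alpha)=\exp(\alpha X)$ for a unique $X\in\mathfrak{sl}(2,\mathbb R)$. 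Moreover $X_{12}\ne0$, for otherwise $X$ is lower triangular, every $\exp(\alpha X)$ has vanishing $(1,2)$-entry, and $B(\alpha)\equiv\infty$, contradicting nondegeneracy of the triple at some $\alpha$.

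\emph{Step 3 (normal form and conclusion).} Because $X\in\mathfrak{sl}(2,\mathbb R)$ has $X_{12}\ne0$, conjugating first by $\operatorname{diag}(X_{12}^{-1},1)$ (to make $X_{12}=1$) and then by a suitable lower-triangular unipotent (to kill the diagonal) brings $X$ to $\left(\begin{smallmatrix}0&1\\ \rho&0\end{smallmatrix}\right)$ for some $\rho\in\mathbb R$; crucially, both conjugations lie in $\mathcal T$, so no general $SL(2,\mathbb R)$-conjugation is required — this is precisely where $X_{12}\ne0$ enters. Since $\det X=-\rho$ is only rescaled by a positive factor under $\mathcal T$, $\operatorname{sign}\rho$ is an invariant, giving three cases. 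If $\rho<0$, conjugating further by $\operatorname{diag}(\sqrt{-\rho},1)$ and reparametrizing by $\omega=(-\rho)^{-1/2}$ normalizes $X$ to $J$; if $\rho=0$ it already equals $N$; if $\rho>0$ the analogous scaling and reparametrization normalize it to $K$. Transporting back through $\Phi$ shows that $\mathscr L_\alpha$ is the image under an element of $\mathcal T$ of $\{\mathscr F_\alpha\}$, $\{\mathscr E_\alpha\}$, or $\{\mathscr G_\alpha\}$; spelling out the resulting triples — with the convention that on the discrete exceptional set one takes the completed transforms, as for $\mathscr F_{n\pi}$ — yields exactly $(I)$, $(II)$, $(III)$ (in $(II)$ the reparametrization is absorbed into $\lambda$), and the three are mutually exclusive because $\operatorname{sign}(\det X)\in\{1,0,-1\}$ distinguishes them.

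\emph{Main obstacle.} The one delicate point is Step 2: one must verify that the one-parameter group of \emph{operators} descends to a genuine (not merely projective) homomorphism $\mathbb R\to SL(2,\mathbb R)$, which holds because LCT composition is matrix multiplication on the nose, the sign/phase ambiguity being confined to the constant $D$ and hence invisible at the level of triples; and one must check that \eqref{eq:cond-1}--\eqref{eq:cond-2} are exactly calibrated — strong enough to invoke automatic smoothness, yet compatible with the unavoidable degeneration of the kernel $[A(\alpha),B(\alpha),C(\alpha)]$ on a discrete set of parameters. Steps 1 and 3 are a routine Gaussian integral and a $2\times2$ normal-form computation.
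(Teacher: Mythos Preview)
Your argument is correct and takes the Lie-theoretic route the paper explicitly sets aside: immediately after stating the theorem the paper cites \cite[\S9.3]{AlgebraicSupplement} for essentially this approach and then writes ``Our approach here is more direct.'' The paper instead evaluates the Gaussian composition $\mathscr L_{\tilde\alpha}\circ\mathscr L_\alpha$ by hand and reads off four functional equations for $A,B,C,D$; after showing from the symmetry of equation~2) that $C-A\equiv C_0$ is constant and substituting $a=A+\tfrac{C_0}{2}$, $b=1/B$, it differentiates the resulting relations repeatedly (this is exactly where \eqref{eq:cond-2} is consumed), rules out several degenerate cases by contradiction, and eventually reduces everything to the scalar ODE $b''=\omega b$, the sign of $\omega$ giving the trichotomy. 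You instead push the group law into $SL(2,\mathbb R)$ via $\Phi$, so the trichotomy is just the sign of $\det X$ for the generator, and the normalization by lower-triangular conjugation is a two-line computation. What your route buys: conceptual transparency, no use of \eqref{eq:cond-1}, and a far weaker regularity hypothesis --- measurability of $\alpha\mapsto M(\alpha)$ already suffices via automatic smoothness, confirming the paper's own remark that its conditions ``can likely be weakened.'' What the paper's route buys: complete self-containment, with no appeal to the metaplectic correspondence or to the one-parameter-subgroup theorem for Lie groups.
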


The one-parameter subgroups \eqref{eq:thm1} 
are essentially identified in \cite[\S 9.3]{AlgebraicSupplement} 
using Lie-theoretic methods. 
However, the problem of characterizing \textit{all} one-parameter subgroups 
of the LCTs 
is considered only formally in \cite{AlgebraicSupplement}. 
Our approach here
is more direct: 
We 
turn the subgroup property into functional equations satisfied by 
$A(\cdot)$, $B(\cdot)$, and $C(\cdot)$, 
and eventually 
reduce the problem to solving 
differential equations. 
It should be noted that 
the regularity condition of Theorem \ref{thm:LCT} 
is limited by our approach 
and can likely be weakened. 
Note also that 
there are one-parameter subgroups of the LCTs 
(corresponding to the case 
$B(\alpha)\equiv \infty$) 
which are not contained in \eqref{eq:thm1} (see \cite{AlgebraicSupplement}). 

The second objective of this note is to 
study one-parameter families 
of LCTs 
of the form 
\begin{equation}
\label{eq:gen-LCT} 
    \mathcal L_{\alpha}f(u)=
    D(\alpha)\int_{\mathbb{R}} e^{i[\frac{1}{2}A(\alpha) u^2-B(\alpha) ut+\frac{1}{2}C(\alpha) t^2]} f(t) dt
    \quad (\alpha\in I), 
\end{equation} 
where $A(\alpha), B(\alpha), C(\alpha), 
D(\alpha)$ are as in \eqref{eq:LCT}, 
and $I$ is an interval.  
More specifically, 
we are interested in finding necessary and sufficient conditions 
for $\mathcal L_{\alpha}f$ to be continuous 
in $\alpha$ 
and $f$ 
in 
certain topologies.  
To this end, 
it is useful to notice that $A(\alpha)$ 
does not influence the magnitude of 
$\mathcal L_{\alpha}f$, 
and that the integral in \eqref{eq:gen-LCT} becomes highly oscillatory 
when $|B(\alpha)|$ or $|C(\alpha)|\rightarrow\infty$. 
Note that, 
by composing $\mathscr L_{A,B,C}$ with $\mathscr F_{-\frac{\pi}{2}}$, 
one has the identity 
\begin{align}
    \mathscr L_{A,B,C}f(u)
    &=\mathscr L_{A-\frac{B^2}{C},-\frac{B}{C},-\frac{1}{C}}\hat f(u)\notag \\ 
    &=\frac{D}{\sqrt{|C|}}\,e^{i(\frac{A}{2}u^2+\frac{\pi}{4})}\int_{\mathbb{R}} e^{-i\frac{(Bu-v)^2}{2C}} \hat f(v) dv, \label{eq:LCT-FT} 
\end{align} 
which can be used to turn a highly oscillatory integral into a 
(at least formally) 
less oscillatory one. 

Assuming that $D(\alpha)$ is continuous, 
in Section \ref{sec:L2} 
we show that $\mathcal L_\alpha: L^2(\mathbb R)\rightarrow L^2(\mathbb R)$\linebreak 
is strongly continuous in $\alpha$
if and only if 
$A(\alpha)$, $B(\alpha)$, and $C(\alpha)$ 
are continuous in $\alpha$. 
Assuming that $A(\alpha)$, $B(\alpha)$, $C(\alpha)$, and $D(\alpha)$ 
are all continuous, in Section \ref{sec:pointwise}
we show that 
$\mathcal L_\alpha f(u)$ 
($u$ fixed) 
is continuous in $\alpha$ for all 
$f\in L^2(\mathbb R, (1+t^2)^{r}dt)$ 
if and only if $r>1/2$, 
and that, 
assuming additionally $C(\alpha)\neq0$, 
$\mathcal L_\alpha f(u)$ is continuous for all 
$f\in H^s(\mathbb R)$ 
(Sobolev space of order $s$) 
if and only if $s>1/2$. 

To study the case where 
$|C(\alpha)|\rightarrow\infty$, 
we will specialize to the situation where the convergence of $\mathcal L_\alpha f$ can be reduced 
by \eqref{eq:LCT-FT} to 
that of 
\begin{equation}
\label{eq:Lalpha-limit}
L_a f(u)
:=\frac{1}{\sqrt{2\pi}}
\int_{\mathbb{R}} e^{i[b(a)uv+a v^2]} \hat f(v) dv
\quad (a\rightarrow 0), 
\end{equation}
where $b(a)$ satisfies 
$\lim\limits_{a\rightarrow 0} b(a)=1$. 
This is indeed the case, 
for example, when 
$\mathcal L_\alpha=\mathscr E_\alpha$ (with $a=-\frac{\alpha}{2}$), 
$\mathscr F_\alpha$ (with $a=-\frac{\tan\alpha}{2}$), 
or $\mathscr G_\alpha$ (with $a=-\frac{\tanh\alpha}{2}$). 

Assuming that $b(a)$ is Lipschitz near $a=0$, 
in Section \ref{sec:pointwise} 
we extend a result by Carleson \cite{Carleson1980} for the case 
$b(a)=1$, by showing that 
$$\lim_{a\rightarrow0} L_af(u)=f(u),\quad \forall u\in\mathbb R$$
holds for all $f\in C^s_c(\mathbb R)$ 
(H\"older space of order $s$) 
if and only if $s>1/2$, 
and that 
$$\limsup_{a\rightarrow0} |L_a f(u)| <\infty,\quad \forall u\in\mathbb R$$
holds for all $f\in C^s_c(\mathbb R)$ 
if and only if $s\ge 1/2$. 

Under the same assumption on $b(a)$, 
in Section \ref{sec:ae-conv} 
we show that 
$$\lim_{a\rightarrow0} L_af(u)=f(u),\quad a.e.\; u\in\mathbb R$$ 
holds for all $f\in H^s(\mathbb R)$ 
if and only if $s\ge 1/4$. 
The necessity part is a direct generalization 
of a result by Dahlberg-Kenig \cite{DahlbergKenig1980} 
for the case $b(a)=1$. 
The sufficiency part 
is due to Carleson \cite{Carleson1980} 
when $b(a)=1$, 
and to Cho-Lee-Vargas \cite{Cho2012} (for $s>1/4$)
and Ding-Niu \cite{Ding2017} (for $s=1/4$) 
in the general case. 
In fact, 
\cite{Cho2012} and \cite{Ding2017} 
consider more general phase functions 
and their proofs are quite involved. 
Here we give a simplified proof utilizing the quadratic phase structure in \eqref{eq:Lalpha-limit}. 

Finally, Section \ref{sec:global-bd} 
is concerned with possible 
global bound for the maximal function  
$$L_* f(u):=\sup_{a}|L_a f(u)|,\quad u\in\mathbb R.$$ 
Extending \cite[Theorem~3]{Sjoegren2010} 
for the case $b(a)=\sqrt{1+a^2}$, 
we show that 
if $b(a)\neq 1$ and is continuous, 
then no global bound of the form 
$$L_*: H^s(\mathbb R)\rightarrow L^p(\mathbb R)$$ 
holds for $s>0$ and $p<\infty$. 
This is in sharp contrast with the case $b(a)=1$, 
for which $L_*$ 
is known to be bounded from $H^{1/4}(\mathbb R)$ to $L^4(\mathbb R)$ (see \cite{Kenig1991}). 

\section{Proof of Theorem \ref{thm:LCT}}
In this section we prove 
Theorem \ref{thm:LCT}. 
To formulate the precise conditions, we restate Theorem \ref{thm:LCT} as follows. 
\medskip

\begin{theorem}
\label{thm:LCT-2}
    Let $\mathscr L_\alpha$ $(\alpha\in\mathbb R)$ be a one-parameter family of LCTs of the form \eqref{eq:gen-LCT}, satisfying 
    \begin{align}
        &(i)\ \  A(\alpha)+C(\alpha)\ne0;\label{eq:cond-1} \\
        &(ii)\ A''(\alpha)\ \text{and}\ B''(\alpha)\ \text{exist}.\label{eq:cond-2}
    \end{align}
    Then $\mathscr L_\alpha$ $(\alpha\in\mathbb R)$ 
    forms a one-parameter group in the sense of \eqref{eq:group}
    if and only if $\mathscr L_\alpha$ belongs to one of the three classes in \eqref{eq:thm1}. 
\end{theorem}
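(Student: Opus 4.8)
The plan is to prove the two implications separately; the content is concentrated in the ``only if'' direction. For the ``if'' direction one checks \eqref{eq:group} for each family in \eqref{eq:thm1} by a direct computation: composing two of the kernels and evaluating the Fresnel (Gaussian) integral in the intermediate variable again produces a quadratic-phase kernel, whose coefficients of $u^2$, $ut$ and $t^2$ are precisely those of the family at the summed parameter and whose normalizing constant works out. For $\{\mathscr F_\alpha\}$, $\{\mathscr E_\alpha\}$, $\{\mathscr G_\alpha\}$ — the elliptic, parabolic and hyperbolic one-parameter subgroups of the $SL(2,\mathbb R)$ action behind the LCTs — this is classical, and the remaining members of \eqref{eq:thm1} are the images of these under the transformations \eqref{eq:omega}--\eqref{eq:gamma}, which preserve the group property.

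For the converse, first convert \eqref{eq:group} into functional equations for $A,B,C$. Writing out $\mathscr L_\alpha\circ\mathscr L_\beta$ and integrating out the intermediate variable by the Fresnel formula (valid whenever $C(\alpha)+A(\beta)\neq0$), the composition is a quadratic-phase kernel; matching its coefficients against those of $\mathscr L_{\alpha+\beta}$ gives
\begin{align*}
B(\alpha+\beta)&=\frac{B(\alpha)\,B(\beta)}{A(\beta)+C(\alpha)},\qquad
A(\alpha+\beta)=A(\alpha)-\frac{B(\alpha)^2}{A(\beta)+C(\alpha)},\\
C(\alpha+\beta)&=C(\beta)-\frac{B(\beta)^2}{A(\beta)+C(\alpha)}.
\end{align*}
Taking $\beta=-\alpha$, the composition must be $\mathscr L_0=I$, which is a degenerate LCT (its effective $B$ is infinite); hence $A(-\alpha)+C(\alpha)=0$, so that $C(\alpha)=-A(-\alpha)$ — in particular $C$ inherits the regularity \eqref{eq:cond-2} from $A$, and $B$ is odd. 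With this, condition \eqref{eq:cond-1} becomes $A(\alpha)\neq A(-\alpha)$ for $\alpha\neq0$; in particular $A$ is non-constant.

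Next I would reduce to an ODE. Differentiating the equation for $A(\alpha+\beta)$ separately with respect to $\alpha$ and to $\beta$ and equating the two resulting expressions for $A'(\alpha+\beta)$, then solving for $A'(\beta)$, yields (after expanding in powers of $A(\beta)$) an identity
$$A'(\beta)=P_2(\alpha)\,A(\beta)^2+P_1(\alpha)\,A(\beta)+P_0(\alpha),\qquad P_2=\frac{A'}{B^2},$$
with $P_1,P_0$ likewise polynomial in $A,B,C,A',B',C'$; here \eqref{eq:cond-2} is what makes these derivatives available. Since the left-hand side is independent of $\alpha$ while $A$, being continuous and non-constant, takes infinitely many values, the $P_i$ must be constants $c_2,c_1,c_0$, and \eqref{eq:cond-1} forbids $A'\equiv0$, hence $c_2\neq0$. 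Thus $A$ solves the autonomous, separable equation $A'=c_2A^2+c_1A+c_0$ with $B^2=A'/c_2$. Integrating, the form of $A$ is dictated by the sign of $\Delta=c_1^2-4c_0c_2$: $\Delta<0$ gives $A$ of cotangent type, $\Delta=0$ of reciprocal ($1/\alpha$) type, $\Delta>0$ of hyperbolic-cotangent type; in each case $B$ follows (up to a sign fixed by oddness) from $B^2=A'/c_2$ and $C$ from $C(\alpha)=-A(-\alpha)$, the constraint $\mathscr L_0=I$ pins down the integration constant, and $c_0,c_1,c_2$ are absorbed by \eqref{eq:omega}--\eqref{eq:gamma}. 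This lands $\mathscr L_\alpha$ in class (I), (II) or (III).

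I expect the main obstacle to be the degenerate set where the kernel formula breaks down — where the effective $B$ is infinite, above all $\alpha=0$ (where $\mathscr L_0=I$). The functional equations, and hence the ODE derivation, are a priori valid only on the complement of this small set, so one must argue that the formulas extend across it and that the poles of the resulting $A$ and $B$ are exactly the permitted degeneracies. A secondary nuisance is the normalizing constant $D(\alpha)$ (equivalently, the metaplectic sign): it does not enter the equations for $A,B,C$, but it must be tracked to confirm the full group law — in particular in the ``if'' direction.
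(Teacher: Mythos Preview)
Your proposal is correct and reaches the same conclusion, but by a genuinely different route than the paper.

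The paper's argument runs as follows: from the symmetry of the $B$-equation it deduces that $C(\alpha)-A(\alpha)\equiv C_0$ is constant, then passes to the new unknowns $a=A+\tfrac{C_0}{2}$ and $b=1/B$; after a somewhat intricate chain of differentiations at $0$ (establishing $b'(0)\neq0$ and $(ab)'(0)=0$, via an auxiliary function $f(\alpha)=b(\alpha)e^{-(ab)'(0)\alpha}$), it arrives at a second-order \emph{linear} ODE $b''=\omega b$ whose sign of $\omega$ gives the trichotomy.

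Your route instead differentiates the $A$-equation in each variable and equates the two expressions for $A'(\alpha+\beta)$, obtaining a first-order autonomous Riccati equation $A'=c_2A^2+c_1A+c_0$ together with the relation $B^2=A'/c_2$; the sign of the discriminant $c_1^2-4c_0c_2$ yields the three families, and $B$, $C$ are then read off. This is shorter and more transparent --- one differentiation step instead of several, and $B$ comes for free --- and your Riccati equation is of course exactly the standard linearization of the paper's $b''=\omega b$. What the paper's approach buys is that it never needs to invoke the degenerate point $\alpha+\beta=0$: the constancy of $C-A$ comes from the \emph{nondegenerate} symmetry $A(\beta)+C(\alpha)=A(\alpha)+C(\beta)$ of the $B$-equation, so the functional equations are used only where they are manifestly valid. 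Your relation $C(\alpha)=-A(-\alpha)$ is sharper but requires the limiting argument you flag. (If you want to avoid it, note that $A$ non-constant also follows directly: $A$ constant in the $A$-equation forces $B\equiv0$.) Your remarks about the degenerate set and about tracking $D(\alpha)$ are apt; the paper handles $D(\alpha)$ at the very end by reducing to a Cauchy-type equation $E(\alpha+\tilde\alpha)=E(\alpha)E(\tilde\alpha)$.
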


\begin{proof}
By calculation, we get $\forall f\in L^2(\mathbb{R})$,
\begin{align*}
    \mathscr L_{\Tilde{\alpha}}\circ \mathscr L_{\alpha} f(u)=&D(\Tilde{\alpha})D(\alpha)\sqrt{\frac{i\pi}{C(\alpha)+
    A(\Tilde{\alpha})}}\times\\
    &\int_{\mathbb{R}} f(t)e^{i\big[(A(\alpha)-\frac{B^2(\alpha)}{4(A(\Tilde{\alpha})+C(\alpha))})t^2-\frac{B(\alpha)B(\Tilde{\alpha})}{2(A(\Tilde{\alpha})+C(\alpha))}ut+(C(\alpha)-\frac{B^2(\alpha)}{4(A(\Tilde{\alpha})+C(\alpha))})u^2\big]}dt.
\end{align*}
Then we get that algebraic property is equivalent to equations 
\begin{align*}
    \begin{cases}
        A(\Tilde{\alpha}+\alpha)=A(\alpha)-B^2(\alpha)\big(A(\Tilde{\alpha})+C(\alpha)\big)^{-1}&1)\\
    B(\Tilde{\alpha}+\alpha)=B(\alpha)B(\Tilde{\alpha})\big(A(\Tilde{\alpha})+C(\alpha)\big)^{-1}&2)\\
    C(\Tilde{\alpha}+\alpha)=C(\alpha)-B^2(\alpha)\big(A(\Tilde{\alpha})+C(\alpha)\big)^{-1}&3)\\
D(\Tilde{\alpha}+\alpha)=\sqrt{2\pi i}D(\alpha)D(\Tilde{\alpha})\big(A(\Tilde{\alpha})+C(\alpha)\big)^{-\frac{1}{2}}.&4)
    \end{cases}
\end{align*}

For sufficiency, we just need to substitute $(\uppercase\expandafter{\romannumeral1}), (\uppercase\expandafter{\romannumeral2})$ and $(\uppercase\expandafter{\romannumeral3})$ into the equations above. 

For necessity, firstly we show that $\forall\ \alpha,\ C(\alpha)-A(\alpha)\equiv const.$ By equation 2) and interchange $\alpha$ and $\Tilde{\alpha}$, we get
\begin{align*}
    &C(\alpha)+A(\Tilde{\alpha})=\frac{B(\Tilde{\alpha})B(\alpha)}{B(\alpha+\Tilde{\alpha})},\\
    &C(\Tilde{\alpha})+A(\alpha)=\frac{B(\Tilde{\alpha})B(\alpha)}{B(\alpha+\Tilde{\alpha})}.
\end{align*}
So $\forall\ \alpha,\Tilde{\alpha}\in\mathbb{R},\ C(\alpha)+A(\Tilde{\alpha})=C(\Tilde{\alpha})+A(\alpha)$. Then\ $\exists\ C_0\in\mathbb{R},\ C(\alpha)-A(\alpha)\equiv C_0.$ For simplicity, now let $A(\Tilde{\alpha})+C(\alpha)=(A(\Tilde{\alpha})+\frac{C_0}{2})+(A(\alpha)+\frac{C_0}{2})\triangleq a(\alpha)+a(\Tilde{\alpha}),\ B(\alpha)\triangleq\frac{1}{b(\alpha)}$. Then rewrite euqations 1)-4) below.
\begin{align*}
    \begin{cases}
        a(\Tilde{\alpha}+\alpha)-a(\alpha)=-\big(b^2(\alpha)(a(\alpha)+a(\Tilde{\alpha}))\big)^{-1}&1)'\\
        b(\alpha+\Tilde{\alpha})=b(\alpha)b(\Tilde{\alpha})(a(\alpha)+a(\Tilde{\alpha}))&2)'\\
        D(\alpha+\Tilde{\alpha})=\sqrt{2\pi i}D(\alpha)D(\Tilde{\alpha})(a(\alpha)+a(\Tilde{\alpha}))^{-\frac{1}{2}}.&3)'
    \end{cases}
\end{align*}
In order to derive the solution we want, next we need to show that $$b'(0)\ne0,\ (ab)'(0)=0.$$By 2)' we get
\begin{equation}\label{b(alpha+tilde alpha)}
    b(\alpha+\Tilde{\alpha})=(ab)(\alpha)b(\Tilde{\alpha})+(ab)(\Tilde{\alpha})b(\alpha).
\end{equation}
Differentiate w.r.t. $\Tilde{\alpha}$ and let $\Tilde{\alpha}=0$, we have 
\begin{equation}\label{b'(alpha)}
   b'(\alpha)=(ab)(\alpha)b'(0)+(ab)'(0)b(\alpha).
\end{equation}
If $b'(0)=0,\ b'(\alpha)=-2(ab)'(0)b(\alpha)$. When $(ab)'(0)=0,\ b(\alpha)\equiv const$. And then by 2)', we get $a(\alpha)\equiv const$. This leads to contradiction with 1)'. When $(ab)'(0)\ne0,\ b(\alpha)$ is an exponential function multiplied by a constant at most. Similarly by 2)', we get $a(\alpha)$ is a constant function, which also leads to contradiction with 1)'. So we get $$b'(0)\ne0.$$

For $(ab)'(0)$, we set $f(\alpha)\triangleq b(\alpha)e^{-(ab)'(0)\alpha}$. By \eqref{b(alpha+tilde alpha)} and calculation, we get
\begin{align}
    f'(\alpha)&=b'(0)(af)(\alpha),\label{Def f'(alpha)}\\
    f(\alpha+\Tilde{\alpha})&=(af)(\alpha)f(\Tilde{\alpha})+(af)(\Tilde{\alpha})f(\alpha).\label{f(alpha+tilde alpha)}
\end{align}
Differentiate w.r.t. $\Tilde{\alpha}$ in (\ref{f(alpha+tilde alpha)}) and let $\Tilde{\alpha}=0$, we have 
\begin{equation}\label{f'(alpha)}
   f'(\alpha)=(af)(\alpha)f'(0)+(af)'(0)f(\alpha).
\end{equation}
Similar to proof of $b'(0)\ne0$, we could also show that $f'(0)\ne0.$ By taking derivative of (\ref{Def f'(alpha)}) and (\ref{f'(alpha)}), we have
\begin{align}
    f''(0)&=b'(0)(af)'(0),\notag\\
    f''(0)&=2f'(0)(af)'(0).\label{f''(0)}
\end{align}
Now we suppose $f''(0)\ne0$, then $b'(0)=2f'(0)$. And because $f'(0)=b'(0)(ab)(0)$, we get $(ab)(0)=\frac{1}{2}$, and then we calculate that $$(af)'(0)=(ab)'(0)[1-(ab)(0)]=\frac{1}{2}(ab)'(0).$$ Based on these results and by (\ref{b'(alpha)}), (\ref{f'(alpha)}), we have
\begin{align*}
    \frac{1}{2}b'(\alpha)e^{-(ab)'(0)\alpha}&=\frac{1}{2}[b'(0)(ab)(\alpha)+(ab)'(0)b(\alpha)]e^{-(ab)'(0)\alpha},\\
    f'(\alpha)&=\frac{1}{2}[b'(0)(ab)(\alpha)+(ab)'(0)b(\alpha)]e^{-(ab)'(0)\alpha}.
\end{align*}
Thus, $f'(\alpha)=\frac{1}{2}b'(\alpha)e^{-(ab)'(0)\alpha}$. Since $f'(\alpha)=[b'(\alpha)-(ab)'(0)b(\alpha)]e^{-(ab)'(0)\alpha}$, we have $$b'(\alpha)=2(ab)'(0)b(\alpha).$$
However, it will lead to contradiction with equations 1)'-2)'. So $f''(0)=0$. Then by (\ref{f''(0)}), we get $(af)'(0)=0$. Now rewrite (\ref{f'(alpha)}),
\begin{equation}\label{f'(alpha) f'(0)}
    f'(\alpha)=f'(0)(af)(\alpha).
\end{equation}
When $\alpha=0$, we have that $(af)(0)=1$. Let $\alpha=\Tilde{\alpha}=0$ in (\ref{f(alpha+tilde alpha)}), then $f(0)=0$. And differentiate w.r.t. $\alpha$ in (\ref{f'(alpha) f'(0)}) and let $\alpha=0$, then $f''(0)=0$. Lastly by taking partial derivatives of second order in (\ref{f(alpha+tilde alpha)}), we get $\exists\ \omega\in\mathbb{R}$,
\begin{equation}\label{f''(alpha)}
    f''(\alpha)=\omega f(\alpha).
\end{equation}
In the following, we solve $f$ based on the value range of $\omega$.

When $\omega=0$, since $f(0)=0$, we solve that 
\begin{align*}
    f(\alpha)&=\lambda\alpha,\\
    b(\alpha)&=\lambda\alpha e^{(ab)'(0)\alpha},\ \forall\ \lambda\in\mathbb{R}
\end{align*}
Then by equations 1)'-2)', we get that $$(ab)'(0)=0.$$ 
For the same reason, when $\omega>0$ or $<0$, we could solve $f$ from (\ref{f''(alpha)}). Then by $f(0)=0$ and equations 1)'-2)' we could get that $(ab)'(0)=0$. So (\ref{b'(alpha)}) can be rewritten as
\begin{equation*}
    b'(\alpha)=b'(0)(ab)(\alpha).
\end{equation*}
This equation is similar to (\ref{f'(alpha) f'(0)}), in fact we could follow the proof above and show that $\exists\ \omega\in\mathbb{R}$,
$$b''(\alpha)=\omega b(\alpha).$$
In fact, no matter what $\omega$ is, the proof idea is same. So we just consider $\omega>0$ below.

When $\omega>0$, we solve that $$b(\alpha)=\lambda e^{\sqrt{\omega}\alpha}+\gamma e^{-\sqrt{\omega}\alpha},\ \forall\ \lambda,\omega\in\mathbb{R}.$$
Since $b(0)=f(0)=0$ and $b$ satisfies 1)'-2)', we can calculate that 
\begin{align*}
    \begin{cases}
    A(\alpha)=\frac{1}{\lambda}\coth(\omega\alpha)+\gamma\\
    B(\alpha)=\frac{1}{\lambda}\text{csch}(\omega\alpha)&\ \text{for some}\  \lambda,\omega,\gamma\in\mathbb{R}\\
    C(\alpha)=\frac{1}{\lambda}\coth(\omega\alpha)-\gamma.
    \end{cases}
\end{align*}
Now solve $D(\alpha)$, let $C_1\triangleq\sqrt{2\pi i\lambda}$. By 3)', we have $D(\alpha+\Tilde{\alpha})=C_3 D(\alpha)D(\Tilde{\alpha})\\(\coth(\omega\alpha)+\coth(\omega\Tilde{\alpha}))^{-\frac{1}{2}}.$ Take $E(\alpha)\triangleq C_1\sqrt{\sinh(\omega\alpha)}D(\alpha)$, then $\forall\ \alpha,\Tilde{\alpha}\in\mathbb{R}$, $$E(\alpha+\Tilde{\alpha})=E(\alpha)E(\Tilde{\alpha}).$$
So $\exists\ \theta\in\mathbb{R},\ E(\alpha)=e^{\theta\alpha},\ D(\alpha)=\frac{1}{C_1\sqrt{\sinh(\omega\alpha)}}e^{\theta\alpha}=\sqrt{\frac{1}{2\pi i\lambda\sinh(\omega\alpha)}}e^{\theta\alpha},\ \forall\ \theta\in\mathbb{R}$. On the other hand, we could calculate that $$\|\mathscr L_\alpha f\|_{L^2(\mathbb{R}}=\Big| D(\alpha)\sqrt{\frac{i}{2A(\alpha)}}\Big|\Big(-\frac{2A(\alpha)}{B(\alpha)}\Big)^{\frac{1}{2}}\|f\|_{L^2(\mathbb{R})}.$$
Especially when we suppose that $\mathscr L_\alpha$ is a unitary operator, we have 
$\left| D(\alpha)\sqrt{\frac{i}{2A(\alpha)}}\right|(-\frac{2A(\alpha)}{B(\alpha)})^{\frac{1}{2}}=1$. 
Then we get $e^{\theta\alpha}=\sqrt{2\pi}$. Similarly, we get the solutions $(\uppercase\expandafter{\romannumeral1})$
and $(\uppercase\expandafter{\romannumeral2}).$ 
This completes the proof of Theorem \ref{thm:LCT-2}. 
\end{proof}

\section{$L^2$ convergence}
\label{sec:L2}
In this section, 
we consider strong continuity of the operator 
$\mathcal L_\alpha: L^2(\mathbb R)\rightarrow L^2(\mathbb R)$ 
with respect to $\alpha$. 
We may assume without loss of generality that $C(\alpha)\neq0$,  so that the continuity of $\mathcal L_\alpha$ reduces 
by \eqref{eq:LCT-FT} 
to that of $G_\alpha$ 
(which we now define). 
\medskip

\begin{definition}\label{ExpreesionOnFrequency}
    $G_\alpha$ is an operator well defined on $\mathcal{S}(\mathbb{R})$. For each $\alpha,\ B(\alpha)\ne0$,
    $$G_\alpha f(u)\triangleq D(\alpha)\int_\mathbb{R}\hat{f}(\xi)e^{i[A(\alpha)\xi^2+B(\alpha)u\xi]}d\xi\ (c_0\leqslant\alpha\leqslant c_1).$$
    By density, there is a unique bounded extension of $G_\alpha$ on $L^2$.
\end{definition}
\medskip

First we apply Plancherel's theorem to $G_\alpha$. 
(For related estimates in $L^p(\mathbb{R})$, see \cite{Chen2021}.) 
\medskip 

\begin{lemma}\label{plancherel}
    For $f\in L^2(\mathbb{R})$,
    we have 
    $$\|G_\alpha f\|_{L^2(\mathbb{R})}=\left(\frac{2\pi D^2(\alpha)}{|B(\alpha)|}\right)^\frac{1}{2}\|f\|_{L^2(\mathbb{R})}.$$
\end{lemma}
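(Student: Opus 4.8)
The plan is to recognize $G_\alpha$ as a composition of three maps on $L^2(\mathbb R)$ whose norms are trivial to compute: multiplication by a unimodular chirp on the Fourier side, the Fourier transform itself, and a dilation. Fix $\alpha$ with $B(\alpha)\neq0$ and take $f\in\mathcal S(\mathbb R)$ first, so that $\hat f\in\mathcal S(\mathbb R)\subset L^1(\mathbb R)$ and the integral defining $G_\alpha f(u)$ converges absolutely. Setting $g(\xi):=\hat f(\xi)\,e^{iA(\alpha)\xi^2}$, the chosen normalization of $\mathscr F$ from the introduction gives
$$
G_\alpha f(u)=D(\alpha)\int_{\mathbb R}g(\xi)\,e^{iB(\alpha)u\xi}\,d\xi=\sqrt{2\pi}\,D(\alpha)\,\mathscr F[g]\bigl(-B(\alpha)u\bigr).
$$

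Next I would chain together the three isometric (or near-isometric) steps. Since $|g(\xi)|=|\hat f(\xi)|$ pointwise, Plancherel's theorem for $\mathscr F$ gives $\|g\|_{L^2}=\|\hat f\|_{L^2}=\|f\|_{L^2}$; applying Plancherel once more gives $\|\mathscr F[g]\|_{L^2}=\|g\|_{L^2}$; and the change of variables $v=-B(\alpha)u$ gives
$$
\int_{\mathbb R}\bigl|\mathscr F[g](-B(\alpha)u)\bigr|^2\,du=\frac{1}{|B(\alpha)|}\int_{\mathbb R}\bigl|\mathscr F[g](v)\bigr|^2\,dv=\frac{\|f\|_{L^2}^2}{|B(\alpha)|}.
$$
Combining these with the displayed expression for $G_\alpha f$ yields $\|G_\alpha f\|_{L^2}^2=\dfrac{2\pi D^2(\alpha)}{|B(\alpha)|}\|f\|_{L^2}^2$ for all $f\in\mathcal S(\mathbb R)$, which is the claimed identity on the dense subspace.

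Finally I would pass from $\mathcal S(\mathbb R)$ to $L^2(\mathbb R)$: this is exactly the density argument of Definition \ref{ExpreesionOnFrequency}. The identity just proved shows $G_\alpha$ is bounded on $\mathcal S(\mathbb R)$, hence extends uniquely to a bounded operator on $L^2(\mathbb R)$, and the norm equality persists for the extension because both sides are continuous in $f$ with respect to the $L^2$ norm. I do not anticipate a genuine obstacle here; the only care needed is bookkeeping — tracking the $\sqrt{2\pi}$ from the convention for $\mathscr F$ and the Jacobian $|B(\alpha)|$ from the dilation — and, if $D(\alpha)$ is complex, reading $D^2(\alpha)$ as $|D(\alpha)|^2$, consistent with the normalization $|D|=\sqrt{|B|/2\pi}$ fixed after \eqref{eq:LCT} (under which this constant is $1$ and $G_\alpha$ is unitary).
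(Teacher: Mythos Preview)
Your proof is correct and is essentially the same as the paper's: both define $g(\xi)=\hat f(\xi)$ times the unimodular chirp, recognize $G_\alpha f$ as a constant times the (inverse) Fourier transform of $g$ dilated by $B(\alpha)$, and then combine Plancherel with the Jacobian of the dilation, extending from Schwartz functions to $L^2$ by density. Your write-up is more explicit about the factorization and the $\sqrt{2\pi}$ bookkeeping, and your remark about reading $D^2(\alpha)$ as $|D(\alpha)|^2$ is apt, but there is no substantive difference in method.
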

\begin{proof} Take $g(\xi)\triangleq\hat{f}(\xi)e^{i\frac{1}{2}C(\alpha)\xi^2}$. By Plancherel's theorem, we get $\forall\ f\in\mathcal{S}(\mathbb{R})$,
$$\|G_\alpha f\|^2_{L^2(\mathbb{R})}=\frac{2\pi D^2(\alpha)}{|B(\alpha)|}\|g^\vee \|^2_{L^2(\mathbb{R})}=\frac{2\pi D^2(\alpha)}{|B(\alpha)|}\|f\|^2_{L^2(\mathbb{R})}.$$
By density, we have $\forall f\in L^2(\mathbb{R}),\ \|G_\alpha f\|_{L^2(\mathbb{R})}=(\frac{2\pi D^2(\alpha)}{|B(\alpha)|})^\frac{1}{2}\|f\|_{L^2(\mathbb{R})}.$
\end{proof}
\medskip

   Since we are concerned with continuous $D(\alpha)$, 
   we will assume without loss of generality that 
   $$D(\alpha)=1.$$ 

\medskip

\begin{theorem}\label{L2convergence}
    $\lim\limits_{\alpha\rightarrow\alpha_0}\|G_\alpha f-G_{\alpha_0} f\|_{L^2(\mathbb{R})}=0$ holds for all $f\in L^2(\mathbb{R})$ if and only if $A(\alpha)$ and $B(\alpha)$ are continuous at $\alpha=\alpha_0$.
\end{theorem}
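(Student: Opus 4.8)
The plan is to prove the two implications separately, in both cases exploiting the identity $G_\alpha f(u)=\sqrt{2\pi}\,\big(\widehat f\,e^{iA(\alpha)(\cdot)^2}\big)^{\vee}\!\big(B(\alpha)u\big)$, i.e. $G_\alpha f$ is an $L^2$-dilation (by $B(\alpha)$) of the inverse Fourier transform of $\widehat f\,e^{iA(\alpha)(\cdot)^2}$. This rewriting makes Plancherel's theorem directly applicable (as in Lemma \ref{plancherel}) and isolates the roles of $A$ and $B$.

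For \emph{sufficiency}, assume $A,B$ continuous at $\alpha_0$ (so in particular $B(\alpha)\to B(\alpha_0)\ne 0$). I would split $\|G_\alpha f-G_{\alpha_0}f\|_{L^2}$ by the triangle inequality, first changing the chirp $A(\alpha)\mapsto A(\alpha_0)$ while keeping $B(\alpha)$, then changing the dilation $B(\alpha)\mapsto B(\alpha_0)$. For the first piece, the same computation as in Lemma \ref{plancherel} gives that its square equals $\frac{2\pi}{|B(\alpha)|}\int_{\mathbb R}|\widehat f(\xi)|^2\,\big|e^{iA(\alpha)\xi^2}-e^{iA(\alpha_0)\xi^2}\big|^2\,d\xi$, which tends to $0$ by dominated convergence (integrand $\to 0$ pointwise, dominated by $4|\widehat f|^2\in L^1$, and $|B(\alpha)|^{-1}$ is bounded near $\alpha_0$). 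For the second piece we are left with $2\pi\,\|h(B(\alpha)\cdot)-h(B(\alpha_0)\cdot)\|_{L^2}^2$ with $h=\big(\widehat f\,e^{iA(\alpha_0)(\cdot)^2}\big)^{\vee}\in L^2$, and this $\to 0$ because the dilation group acts strongly continuously on $L^2$ (approximate $h$ by compactly supported continuous functions; $|B|$ stays bounded away from $0$).

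For \emph{necessity}, suppose $\|G_\alpha f-G_{\alpha_0}f\|_{L^2}\to 0$ for every $f$. First, by Lemma \ref{plancherel} this forces $\|G_\alpha f\|_{L^2}\to\|G_{\alpha_0}f\|_{L^2}$, i.e. $|B(\alpha)|\to|B(\alpha_0)|$. To rule out a sign flip, suppose along some $\alpha_n\to\alpha_0$ one has $B(\alpha_n)\to -B(\alpha_0)$; choosing $f$ with $\widehat f$ supported in $[1,2]$, the rewriting above shows $\widehat{G_{\alpha_n}f}$ is supported in $B(\alpha_n)[1,2]$ and $\widehat{G_{\alpha_0}f}$ in $B(\alpha_0)[1,2]$, which lie in opposite half-lines; hence $\|G_{\alpha_n}f-G_{\alpha_0}f\|_{L^2}^2\ge\|G_{\alpha_0}f\|_{L^2}^2>0$, a contradiction. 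Thus $B$ is continuous at $\alpha_0$. For $A$: if $A(\alpha_n)\to L$ (finite) along some $\alpha_n\to\alpha_0$, the sufficiency argument — which used only $A(\alpha_n)\to L$ and $B(\alpha_n)\to B(\alpha_0)$ — gives $G_{\alpha_n}f\to \widetilde G f$ with $\widetilde G$ the operator with parameters $(L,B(\alpha_0))$, so $\widetilde G f=G_{\alpha_0}f$ for all $f$; comparing Fourier transforms forces $e^{i(L-A(\alpha_0))\xi^2}\equiv 1$, whence $L=A(\alpha_0)$. If instead $|A(\alpha_n)|\to\infty$, I would show $G_{\alpha_n}f\rightharpoonup 0$ weakly: for $\phi\in\mathcal S$, Fubini gives $\langle G_{\alpha_n}f,\phi\rangle=\sqrt{2\pi}\int_{\mathbb R}\widehat f(\xi)\,\overline{\widehat\phi(B(\alpha_n)\xi)}\,e^{iA(\alpha_n)\xi^2}\,d\xi$; replacing $B(\alpha_n)$ by $B(\alpha_0)$ costs $o(1)$ (dominated convergence), and then $\psi:=\widehat f\cdot\overline{\widehat\phi(B(\alpha_0)\cdot)}\in L^1$ satisfies $\int_{\mathbb R}\psi(\xi)e^{iT\xi^2}\,d\xi\to 0$ as $|T|\to\infty$ — substitute $\eta=\xi^2$ on each half-line (which preserves integrability) and apply Riemann--Lebesgue. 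Since $\|G_{\alpha_n}f\|_{L^2}$ is bounded, this yields weak convergence to $0$, while strong convergence to $G_{\alpha_0}f$ would then force $G_{\alpha_0}f=0$, contradicting $\|G_{\alpha_0}f\|_{L^2}>0$. Hence $A$ is continuous at $\alpha_0$.

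The routine part is the sufficiency; the main obstacle is the necessity, and specifically the two rigidity arguments: ruling out the sign change of $B$ via the disjoint Fourier-support trick, and handling $|A(\alpha_n)|\to\infty$ via the decay estimate $\int\psi(\xi)e^{iT\xi^2}\,d\xi\to 0$ for $\psi\in L^1$.
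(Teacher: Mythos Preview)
Your argument is correct and follows a genuinely different route from the paper's.

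For sufficiency, the paper argues via a density scheme on the spatial side: uniform operator bounds from Lemma~\ref{plancherel}, then for $f\in C_0^\infty$ it establishes \emph{pointwise} convergence of $G_\alpha f(u)$ together with an $L^2$ dominating function $H(u)$ obtained by integration by parts (giving $|G_\alpha f(u)|\lesssim |u|^{-1}$ for large $|u|$). You instead stay entirely on the Fourier side: the $A$-variation is handled by Plancherel and dominated convergence for $|\widehat f(\xi)|^2|e^{iA(\alpha)\xi^2}-e^{iA(\alpha_0)\xi^2}|^2$, and the $B$-variation by strong continuity of the dilation group on $L^2$. Your route avoids the integration-by-parts step and the construction of a majorant, and works directly for all $f\in L^2$ without passing through a dense subclass.

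For necessity, the paper deduces $|B(\alpha)|\to|B(\alpha_0)|$ from the reverse triangle inequality applied to Lemma~\ref{plancherel} (without explicitly treating a possible sign flip), then reduces to $B\equiv 1$ and produces a specific $\widehat f=\chi_E$ concentrated near $\xi\approx\sqrt{\pi/|A(\alpha)-A(\alpha_0)|}$ to force $\|G_\alpha f-G_{\alpha_0}f\|_{L^2}$ to stay bounded below. Your approach is more structural: the sign of $B$ is pinned down by the disjoint Fourier-support trick, and the continuity of $A$ follows from a uniqueness-of-limits argument along subsequences, with the $|A(\alpha_n)|\to\infty$ case dispatched by the Riemann--Lebesgue lemma after the change of variables $\eta=\xi^2$. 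This cleanly separates the finite-limit and blow-up scenarios and yields a single $f$ witnessing failure, whereas the paper's counterexample as written depends on $\alpha$. What the paper's method buys is explicitness (one sees exactly which frequencies obstruct convergence); what yours buys is robustness and a self-contained treatment of all cases, including the sign of $B$ and unbounded $A$.
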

\begin{proof} For $\delta>0$, define $U(\alpha_0,\delta)\triangleq\{\alpha:\ |\alpha-\alpha_0|<\delta\}$. For sufficiency, by Uniformly Bounded Theorem, it suffices to show that
\begin{align*}
    &(1)\ \exists C_0,\delta_1>0,\ \forall\alpha\in U(\alpha_0,\delta_1),\ \|G_\alpha\|\leqslant C_0.\\
    &(2)\ \forall f\in C^\infty_{0}(\mathbb{R}),\ \lim_{\alpha\rightarrow \alpha_0}\|G_\alpha f-G_{\alpha_0} f\|_{L^2(\mathbb{R})}=0.
\end{align*}
\textbf{For (1)}\quad By condition (1) and (2), we get $B(\alpha)$ exists a positive lower bound $C_0$ on a neighborhood $U(\alpha_0,\delta_1)$. Then using Lemma \eqref{plancherel}, $\forall\alpha\in U(\alpha_0,\delta_1), \ \forall f\in L^2(\mathbb{R}),\\ \|G_\alpha f\|_{L^2(\mathbb{R})}\leqslant C_0\|f\|_{L^2(\mathbb{R})}$. Then $\forall\alpha$,$\ \|G_\alpha\|\leqslant C_0.$

~\\
\textbf{For (2)}\quad It takes two steps.

\uppercase\expandafter{\romannumeral1}.\quad Show that $\forall u\in L^2(\mathbb{R}),\ \forall f\in C^\infty_{0}(\mathbb{R}),\lim\limits_{\alpha\rightarrow\alpha_0}G_\alpha f(u)=G_{\alpha_0} f(u).$ In particular, when $u$ belongs to a bounded interval $[a,b]$, the limit above is uniformly convergent w.r.t $u\in [a,b].$
\begin{align*}
    \forall u\in L^2(\mathbb{R}),\ &|G_\alpha f(u)-G_{\alpha_0} f(u)|\\
    \leqslant&\int_{\mathbb{R}} |\hat{f}(\xi)|\ |e^{i[(A(\alpha)-A(\alpha_0))\xi^2+(B(\alpha)-B(\alpha_0))u\xi]}-1|\ d\xi\\
    \leqslant&\int_{\mathbb{R}}|\hat{f}(\xi)|\ \Big[|A(\alpha)-A(\alpha_0)|\xi^2+|B(\alpha)-B(\alpha_0)|(|a|+|b|)|\xi|\Big]\ d\xi.
\end{align*}
By Dominated Convergence Theorem, the limit is zero as $\alpha\rightarrow \alpha_0.$ 

\uppercase\expandafter{\romannumeral2}.\quad Show that $\forall f\in C^\infty_{0}(\mathbb{R}),\ \exists H\in L^2(\mathbb{R}),\ \forall\alpha,\ \forall u,\ |G_\alpha f(u)|\leqslant|H(u)|.$\\
Firstly we can easily get $\exists C_1>0,\ \forall\alpha,\ |G_\alpha f(u)|\leqslant\frac{C_1}{|u|}$ by integration by parts and $|G_\alpha f(u)|\leqslant\|f\|_{L^1(\mathbb{R})}.$ Now define
\begin{equation*}
    H(u)= \begin{cases}
         \|f\|_{L^1(\mathbb{R})}&\forall\ |u|\leqslant 1,  \\
     \frac{C_1}{|u|}&\forall\ |u|>1.
    \end{cases}
\end{equation*}

By \uppercase\expandafter{\romannumeral1} and \uppercase\expandafter{\romannumeral2}, we prove that $\forall f\in C^\infty_{0}(\mathbb{R}),\ \lim\limits_{\alpha\rightarrow\alpha_0} \|G_\alpha f-G_{\alpha_0} f\|_{L^2(\mathbb{R})}=0.$

For necessity, suppose $\lim\limits_{\alpha\rightarrow\alpha_0}B(\alpha)\ne B(\alpha_0)$, then $\exists\epsilon_0>0,\ \forall\delta>0,\ \exists\alpha\in U(\alpha_0,\delta),\\ \|G_\alpha f-G_{\alpha_0}f\|_{L^2(\mathbb{R})}\geqslant\Big|\sqrt{\frac{1}{B(\alpha)}}-\sqrt{\frac{1}{B(\alpha_0)}}\Big| \sqrt{2\pi}\|f\|_{L^2(\mathbb{R})}\geqslant\epsilon_0 \sqrt{2\pi}\|f\|_{L^2(\mathbb{R})}$. It leads to contradiction with $L^2$ convergence. So $\lim\limits_{\alpha\rightarrow\alpha_0}B(\alpha)= B(\alpha_0)$ is necessary. Furthermore, let $\lim\limits_{\alpha\rightarrow\alpha_0}A(\alpha)\ne A(\alpha_0)$. Without loss of generality, suppose $A(\alpha)-A(\alpha_0)\ne0$ on $U(\alpha_0,\delta)$ and $B(\alpha)\equiv1$, then by Plancherel's theorem, we have $$\|G_\alpha f-G_{\alpha_0}f\|_{L^2(\mathbb{R})}=\|\hat{f}(\xi)(e^{i[A(\alpha)-A(\alpha_0)]\xi^2}-1)\|_{L^2(\mathbb{R})}.$$
Take $\hat{f}(\xi)=\chi_E (\xi)$, where $E=U(\sqrt{\frac{\pi}{|A(\alpha)-A(\alpha_0)}},\sqrt{\frac{1}{50|A(\alpha)-A(\alpha_0)|}})$. Then we have $\lim\limits_{\alpha\rightarrow\alpha_0}\|G_\alpha f-G_{\alpha_0}f\|_{L^2(\mathbb{R})}\ne0$. This leads to a contradiction. The proof of Theorem \ref{L2convergence} is complete. 
\end{proof}

\section{Pointwise convergence} 
\label{sec:pointwise}
In this section we study pointwise continuity of $G_\alpha f(u)$ with respect to $\alpha$. 
First we consider the problem for $f$ belonging to suitable weighted $L^2$ spaces. 
\medskip

\begin{lemma}\label{ExpressionOnTime2}
    Suppose $A(\alpha)\ne 0$. Then $$G_\alpha f(u)=\sqrt{\frac{i}{2A(\alpha)}}D(\alpha)\int_{\mathbb{R}} f(t)e^{-i\frac{(B(\alpha)u-t)^2}{4A(\alpha)}}\ dt,\quad \forall f\in C_0^\infty(\mathbb{R}).$$
\end{lemma}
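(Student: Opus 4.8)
The plan is to reduce the identity to the classical Fresnel (complex Gaussian) integral. Fix $u\in\mathbb R$. Since $f\in C_0^\infty(\mathbb R)$, its Fourier transform $\hat f$ belongs to the Schwartz class, so the integral defining $G_\alpha f(u)$ in Definition~\ref{ExpreesionOnFrequency} is absolutely convergent, and by dominated convergence it equals
$$G_\alpha f(u)=D(\alpha)\lim_{\varepsilon\to 0^+}\int_{\mathbb R}\hat f(\xi)\,e^{i[A(\alpha)\xi^2+B(\alpha)u\xi]-\varepsilon\xi^2}\,d\xi.$$
For each fixed $\varepsilon>0$ I substitute $\hat f(\xi)=\tfrac{1}{\sqrt{2\pi}}\int_{\mathbb R}f(t)e^{-i\xi t}\,dt$; since $f$ has compact support and the factor $e^{-\varepsilon\xi^2}$ supplies decay in $\xi$, Fubini's theorem permits integrating in $\xi$ first.

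The inner integral $\displaystyle\int_{\mathbb R}e^{-(\varepsilon-iA(\alpha))\xi^2+i(B(\alpha)u-t)\xi}\,d\xi$ has $\operatorname{Re}(\varepsilon-iA(\alpha))=\varepsilon>0$, so by completing the square and shifting the contour it equals $\sqrt{\pi/(\varepsilon-iA(\alpha))}\,\exp\!\big(-(B(\alpha)u-t)^2/(4(\varepsilon-iA(\alpha)))\big)$ with the principal branch of the square root. I then let $\varepsilon\to0^+$ inside the remaining $t$-integral. This is legitimate by dominated convergence: using $A(\alpha)\neq0$ and the fact that, for real $w$, $\operatorname{Re}\big(w^2/(4(\varepsilon-iA(\alpha)))\big)\ge0$, the integrand is bounded in modulus by $\sqrt{\pi/|A(\alpha)|}\,|f(t)|$, which is integrable. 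In the limit, $\sqrt{\pi/(\varepsilon-iA(\alpha))}\to\sqrt{i\pi/A(\alpha)}$ and the exponent tends to $-i(B(\alpha)u-t)^2/(4A(\alpha))$.

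Collecting the constants, $D(\alpha)\cdot\tfrac{1}{\sqrt{2\pi}}\cdot\sqrt{i\pi/A(\alpha)}=D(\alpha)\sqrt{i/(2A(\alpha))}$ (principal branch), which gives the claimed formula. The one point requiring care is the branch bookkeeping: I must check that $\lim_{\varepsilon\to0^+}\sqrt{\pi/(\varepsilon-iA(\alpha))}$ is exactly $\sqrt{\pi/|A(\alpha)|}\,e^{i\frac{\pi}{4}\operatorname{sgn}A(\alpha)}$ for both signs of $A(\alpha)$; this fixes the determination of $\sqrt{i/(2A(\alpha))}$ appearing in the statement, and everything else is a routine Gaussian computation plus two uses of dominated convergence. (Alternatively, one may recognize $x\mapsto\sqrt{i/(2A(\alpha))}\,e^{-ix^2/(4A(\alpha))}$ as the inverse Fourier transform of $\xi\mapsto e^{iA(\alpha)\xi^2}$ and write $G_\alpha f$ as a convolution evaluated at the point $B(\alpha)u$, but this still rests on the same Fresnel integral.)
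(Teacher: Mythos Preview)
Your proof is correct and follows essentially the same route as the paper: substitute the expression for $\hat f$, interchange the order of integration, and evaluate the resulting Fresnel integral in $\xi$. The only difference is in the regularization used to justify the interchange and the limit: the paper truncates to $\int_{-N}^{N}$ and lets $N\to\infty$, while you insert a Gaussian damping factor $e^{-\varepsilon\xi^{2}}$ and let $\varepsilon\to0^{+}$; both are standard, and your version has the minor advantage of making the branch choice for $\sqrt{i/(2A(\alpha))}$ explicit.
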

\begin{proof}
 $\forall f\in C^\infty_{0}(\mathbb{R}),$
\begin{align*}
    G_\alpha f(u)&=D(\alpha)\int_{\mathbb{R}}\big(\frac{1}{\sqrt{2\pi}}\int_{\mathbb{R}} f(t)e^{-i\xi t} dt\big)\ e^{i[A(\alpha)\xi^2+B(\alpha)u\xi]}\ d\xi\\
     &=\lim\limits_{N\rightarrow +\infty}D(\alpha)\int_{-N}^{N}e^{i[A(\alpha)\xi^2+B(\alpha)u\xi]}\ \frac{1}{\sqrt{2\pi}}\int_{\mathbb{R}} f(t)e^{-i\xi t}\ dtd\xi\\
    &=\frac{D(\alpha)}{\sqrt{2\pi}}\lim\limits_{N\rightarrow +\infty}\int_{\mathbb{R}} f(t)\int_{-N}^{N}e^{i[A(\alpha)\xi^2+(B(\alpha)u-t)\xi]}\ d\xi dt\quad\text{(Fubini's Theorem).} 
\end{align*}
Since $\int_\mathbb{R}\sin x^2\ dx=\int_\mathbb{R}\cos x^2\ dx=\sqrt{\frac{\pi}{2}}$ (cf. \cite{Integralsinx2}), 
we have for $a\ne0,$ 
$$\int_\mathbb{R} e^{i(a\xi^2+b\xi)}\ d\xi=\sqrt{\frac{\pi}{a}}e^{-i\frac{b^2}{4a}+\frac{\pi}{4}i}.$$
This completes the proof of Lemma \ref{ExpressionOnTime2}.
\end{proof}
\medskip

\begin{theorem}\label{WeightedSpace}
    Suppose $A(\alpha)$ and $B(\alpha)$ are continuous at $\alpha=\alpha_0$, and $A(\alpha_0)\ne0$. 
    Then for all $f\in L^2((1+t^2)^r dt)$ with $r>\frac{1}{2}$, we have 
    \begin{equation}\label{eq:ptwise-conv}
\lim\limits_{\alpha\rightarrow\alpha_0}G_\alpha f(u)=G_{\alpha_0} f(u),\quad u\in\mathbb{R}.\end{equation}
     Conversely, if there exists $u_0\in\mathbb R$ such that $\lim\limits_{\alpha\rightarrow\alpha_0}G_\alpha f(u_0)=G_{\alpha_0} f(u_0)$ holds for all $f\in L^2((1+t^2)^r dt)$, then $r>\frac{1}{2}$. 
\end{theorem}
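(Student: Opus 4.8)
The plan is to split into the sufficiency and necessity directions, using Lemma \ref{ExpressionOnTime2} throughout to work with the non-oscillatory representation
$$G_\alpha f(u)=\sqrt{\tfrac{i}{2A(\alpha)}}\,D(\alpha)\int_\mathbb{R} f(t)\,e^{-i\frac{(B(\alpha)u-t)^2}{4A(\alpha)}}\,dt.$$
For the sufficiency direction I would first extend this formula from $C_0^\infty$ to $f\in L^2((1+t^2)^r dt)$ with $r>1/2$: since $(1+t^2)^{-r}$ is integrable for $r>1/2$, the Cauchy–Schwarz inequality gives $\int_\mathbb{R}|f(t)|\,dt\le \|f\|_{L^2((1+t^2)^r)}\,\big(\int (1+t^2)^{-r}dt\big)^{1/2}<\infty$, so $f\in L^1(\mathbb{R})$ and the integral converges absolutely. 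With $D(\alpha)\equiv 1$ (as assumed just before the theorem) and $A(\alpha_0)\ne0$, $A(\alpha)$ stays bounded away from $0$ near $\alpha_0$, so the prefactor $\sqrt{i/2A(\alpha)}$ is continuous at $\alpha_0$. The integrand converges pointwise in $t$ as $\alpha\to\alpha_0$ (by continuity of $A,B$ and joint continuity of the exponential), and it is dominated by $|f(t)|\in L^1$; the Dominated Convergence Theorem then yields \eqref{eq:ptwise-conv}. One should also verify $G_{\alpha_0}f$ is given by the same formula, which follows by an approximation argument: approximate $f$ in $L^1$ by $C_0^\infty$ functions and use that $G_{\alpha_0}$ agrees with the integral formula on $C_0^\infty$, while the $L^1\to L^\infty$ bound $|G_{\alpha_0}g(u)|\le C\|g\|_{L^1}$ passes to the limit.

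For the necessity direction, I would argue by contradiction: suppose $r\le 1/2$ and fix $u_0$. I want to construct $f\in L^2((1+t^2)^r dt)$ for which $G_\alpha f(u_0)$ fails to converge to $G_{\alpha_0}f(u_0)$. The natural candidate is a function that is ``barely'' in the weighted space, e.g. $f(t)\sim |t|^{-1/2}(\log|t|)^{-\beta}$ for large $|t|$ with $\beta$ chosen so that $f\in L^2((1+t^2)^r dt)$ when $r\le 1/2$ but $f\notin L^1(\mathbb{R})$. For such $f$ the representation from Lemma \ref{ExpressionOnTime2} is not absolutely convergent, and the point is that the oscillatory factor $e^{-i(B(\alpha)u_0-t)^2/4A(\alpha)}$ depends sensitively on $\alpha$ at large $|t|$: as $\alpha$ varies, the ``center'' $B(\alpha)u_0$ of the Gaussian phase and the curvature $1/4A(\alpha)$ move, and because the tail of $f$ is not integrable these movements produce an oscillation in $G_\alpha f(u_0)$ that does not settle down. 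Concretely I would compute $G_\alpha f(u_0)$ via the frequency-side definition in Definition \ref{ExpreesionOnFrequency} — where $\hat f$ is smooth away from the origin but has a non-integrable singularity matching the tail decay of $f$ — and extract from the phase $A(\alpha)\xi^2+B(\alpha)u_0\xi$ a stationary-phase contribution near $\xi=0$ whose size is governed by exactly the failing weight; varying $A(\alpha)$ (or $B(\alpha)$) along a sequence $\alpha_n\to\alpha_0$ then makes $G_{\alpha_n}f(u_0)$ oscillate.

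The main obstacle will be the necessity part: making the divergence construction precise and quantitative. The subtlety is that one has freedom in how $A(\alpha)$ and $B(\alpha)$ approach their limits (the theorem's hypothesis only gives continuity), so the counterexample function $f$ must be chosen to defeat \emph{every} admissible pair $(A,B)$ with $A(\alpha_0)\ne0$ — or, more carefully, one fixes a specific continuous choice of $A(\alpha),B(\alpha)$ realizing the failure and exhibits $f$ for that choice, which suffices to show $r>1/2$ is forced in general. I expect the cleanest route is: pick $A(\alpha)=\alpha$, $B(\alpha)\equiv 1$, $\alpha_0 = $ some nonzero value, reduce to estimating $\int_\mathbb{R} f(t) e^{-i(u_0-t)^2/4\alpha}\,dt$, change variables $s=t-u_0$, and show that for $f(t)=|t|^{-1/2-r}$-type tails the integral $\int |s|^{-1/2-r} e^{-is^2/4\alpha}\,ds$ has a genuine dependence on $\alpha$ (via a Fresnel/gamma-function computation) that does not tend to the $\alpha\to\alpha_0$ value when $r\le 1/2$, because the relevant scaling exponent crosses the integrability threshold exactly at $r=1/2$. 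The borderline case $r=1/2$ with a logarithmic correction is the most delicate and will require care in choosing the logarithmic power $\beta$.
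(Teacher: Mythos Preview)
Your sufficiency argument is correct and in fact slightly more direct than the paper's. The paper runs the standard two-step scheme---show convergence on the dense class $C_0^\infty$ (quoting Theorem \ref{L2convergence}) and establish a uniform bound $|G_\alpha f(u)|\le C\|f\|_{L^2((1+t^2)^r dt)}$ via Cauchy--Schwarz---whereas you observe that $r>\tfrac12$ already forces $f\in L^1$, so once Lemma \ref{ExpressionOnTime2} is extended to $L^1\cap L^2$ the dominated convergence theorem finishes the job in one stroke. Both routes are fine; yours trades the abstract density argument for a single application of DCT.

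For the necessity direction, however, you are working much harder than necessary, and the plan as written is not close to a proof. You propose constructing an explicit $f$ with borderline tail decay $|t|^{-1/2}(\log|t|)^{-\beta}$, then extracting non-convergence from a stationary-phase or Fresnel computation, with the endpoint $r=\tfrac12$ handled separately. The paper bypasses all of this with a soft functional-analytic argument: if $G_\alpha f(u_0)\to G_{\alpha_0}f(u_0)$ for every $f$ in the Hilbert space $L^2((1+t^2)^r\,dt)$, then by the uniform boundedness principle the linear functionals $f\mapsto G_\alpha f(u_0)$ have uniformly bounded norm. By Lemma \ref{ExpressionOnTime2} and the Riesz representation theorem, that norm equals
\[
\Bigl\|\sqrt{\tfrac{i}{2A(\alpha)}}\,(1+t^2)^{-r/2}\,e^{-i(B(\alpha)u_0-t)^2/4A(\alpha)}\Bigr\|_{L^2(\mathbb{R})}
=\Bigl(\tfrac{1}{2|A(\alpha)|}\int_\mathbb{R}(1+t^2)^{-r}\,dt\Bigr)^{1/2},
\]
since the exponential is unimodular. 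This is finite if and only if $r>\tfrac12$, and no explicit counterexample is ever built. Your route might eventually be pushed through, but the oscillatory-integral analysis and the logarithmic endpoint are genuine extra work that the Banach--Steinhaus/Riesz argument makes unnecessary; the integrability threshold for $(1+t^2)^{-r}$ is the entire content of the necessity claim.
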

\begin{proof} By the uniform boundedness theorem, it suffices to show 
\begin{align*}
    \text{\uppercase\expandafter{\romannumeral1}}\quad &\forall u\in\mathbb{R},\ \forall f\in C^\infty_{0}(\mathbb{R}),\ \lim\limits_{\alpha\rightarrow\alpha_0}G_\alpha f(u)=G_{\alpha_0} f(u);\\
   \text{\uppercase\expandafter{\romannumeral2}}\quad &\forall u\in\mathbb{R},\ \exists C_0,\delta>0,\ \sup\limits_{\substack{f\in C^\infty_0(\mathbb{R})\\\alpha\in U(\alpha_0,\delta)}}\frac{|G_\alpha f(u)|}{\|f\|}\leqslant C_0,\ \text{where}\ \|f\|=\big(\int_{\mathbb{R}}|f(t)|^2 (1+t^2)^{r}dt\big)^{\frac{1}{2}}.
\end{align*}

\textbf{For \uppercase\expandafter{\romannumeral1}}\quad By condition and Theorem \ref{L2convergence}, we can get \uppercase\expandafter{\romannumeral1}.

\textbf{For \uppercase\expandafter{\romannumeral2}}\quad By Lemma \ref{ExpressionOnTime2}, When $r>\frac{1}{2}$,
\begin{align*}
    |G_\alpha f(u)|&\leqslant C_0\int_{\mathbb{R}}|f(t)|dt\\
    &\leqslant C_0\|f\|_{L^2((1+t^2)^r dt)}\big(\int_{\mathbb{R}}(1+t^2)^{-r}dt\big)^{\frac{1}{2}}\\
    &\leqslant C_0\|f\|_{L^2((1+t^2)^r dt)}.
\end{align*}
Conversely, if for all $f\in L^2((1+t^2)^r dt),\ \lim\limits_{\alpha\rightarrow\alpha_0}G_\alpha f(u_0)=G_{\alpha_0}f(u_0)$. By the uniform boundedness theorem again, $\exists C>0$, such that $\|G_\alpha\|\leqslant C$. By the Riesz representation theorem, we have $$\|G_\alpha\|=\Big\Vert\sqrt{\frac{i}{2A(\alpha)}}(1+(\cdot)^2)^{-\frac{r}{2}}e^{-i\frac{(B(\alpha)u_0-(\cdot))^2}{4A(\alpha)}}\Big\Vert_{L^2(\mathbb{R})}.$$
Thus $r>\frac{1}{2}$. 
This completes the proof of Theorem \ref{WeightedSpace}. 
\end{proof}

\begin{remark}
    When $r>\frac{1}{2}$, we have $L^2((1+t^2)^r dt)\subset  L^1(\mathbb{R})$, thus \eqref{eq:ptwise-conv} holds even when $B(\alpha_0)=0$. 
\end{remark}
\medskip

Next, we consider the pointwise convergence problem for $f$ belonging to the Sobolev spaces $H^s(\mathbb{R})$. 
\medskip

\begin{theorem}\label{sobolev}
    Suppose $A(\alpha)$ and $B(\alpha)$ are continuous at $\alpha=\alpha_0$. Then for all $f\in H^s(\mathbb{R})$ with $s>\frac{1}{2}$, we have 
    $$\lim\limits_{\alpha\rightarrow\alpha_0}G_\alpha f(u)=G_{\alpha_0} f(u),\quad  u\in\mathbb{R}.$$
    Conversely, if there exists $u_0\in\mathbb R$ such that $\lim\limits_{\alpha\rightarrow\alpha_0}G_\alpha f(u_0)=G_{\alpha_0} f(u_0)$ holds for all $f\in H^s(\mathbb{R})$, 
    then $s>\frac{1}{2}$.
\end{theorem}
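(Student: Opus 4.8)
The plan is to argue as in the proofs of Theorems~\ref{L2convergence} and~\ref{WeightedSpace}, combining the uniform boundedness principle with the density of $C_0^\infty(\mathbb R)$ in $H^s(\mathbb R)$. For the sufficiency direction fix $u\in\mathbb R$. It suffices to establish (i) $\lim_{\alpha\to\alpha_0}G_\alpha f(u)=G_{\alpha_0}f(u)$ for every $f\in C_0^\infty(\mathbb R)$, and (ii) a bound $|G_\alpha f(u)|\le C_0\|f\|_{H^s(\mathbb R)}$ for all $f\in C_0^\infty(\mathbb R)$, uniformly in $\alpha$; a standard $\varepsilon/3$ argument then gives the conclusion for all $f\in H^s(\mathbb R)$. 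Claim (i) is exactly step~\uppercase\expandafter{\romannumeral1} in the proof of Theorem~\ref{L2convergence} and uses only the continuity of $A$ and $B$ at $\alpha_0$. Claim (ii) is immediate from the frequency-side formula of Definition~\ref{ExpreesionOnFrequency}: since $s>\frac12$ implies $\hat f\in L^1(\mathbb R)$, Cauchy--Schwarz gives
\[
|G_\alpha f(u)|\le\int_{\mathbb R}|\hat f(\xi)|\,d\xi\le\Big(\int_{\mathbb R}(1+\xi^2)^{-s}\,d\xi\Big)^{1/2}\|f\|_{H^s(\mathbb R)},
\]
and the last integral is finite precisely because $s>\frac12$; note the bound is uniform in $\alpha$ and $u$ (and, unlike in Theorem~\ref{WeightedSpace}, makes no use of $A(\alpha_0)\ne0$).

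For the necessity direction, suppose that for some $u_0\in\mathbb R$ the limit $\lim_{\alpha\to\alpha_0}G_\alpha f(u_0)=G_{\alpha_0}f(u_0)$ holds for all $f\in H^s(\mathbb R)$. Then, as in the necessity part of Theorem~\ref{WeightedSpace}, the uniform boundedness theorem shows that the functional $\Lambda_\alpha\colon f\mapsto G_\alpha f(u_0)$ is bounded on $H^s(\mathbb R)$ for $\alpha$ near $\alpha_0$. Writing $g(\xi)=\hat f(\xi)(1+\xi^2)^{s/2}$, which is an isometry of $H^s(\mathbb R)$ onto $L^2(\mathbb R)$, we have $\Lambda_\alpha f=\int_{\mathbb R}g(\xi)(1+\xi^2)^{-s/2}e^{i[A(\alpha)\xi^2+B(\alpha)u_0\xi]}\,d\xi$, so by Plancherel and the Riesz representation theorem
\[
\|\Lambda_\alpha\|_{(H^s)^*}=\Big\Vert(1+\xi^2)^{-s/2}e^{i[A(\alpha)\xi^2+B(\alpha)u_0\xi]}\Big\Vert_{L^2(\mathbb R)}=\Big(\int_{\mathbb R}(1+\xi^2)^{-s}\,d\xi\Big)^{1/2},
\]
the unimodular phase dropping out of the $L^2$ norm. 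When $s\le\frac12$ this is $+\infty$, a contradiction; hence $s>\frac12$. One can exhibit the failure concretely by testing against $f_n$ with $\hat f_n(\xi)=\varphi(\xi/n)(1+\xi^2)^{-s}e^{-i[A(\alpha)\xi^2+B(\alpha)u_0\xi]}$ for a fixed bump function $\varphi$, for which $|\Lambda_\alpha f_n|\ge\|f_n\|_{H^s}^2\to\infty$.

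I do not anticipate a genuine obstacle here: in contrast with the almost-everywhere and H\"older-space results of the later sections, this pointwise statement for $s>\frac12$ is essentially the Sobolev embedding $H^s(\mathbb R)\hookrightarrow L^\infty(\mathbb R)$ in disguise. The only things to watch are the bookkeeping in the $\varepsilon/3$ density step and the clean identification of the dual norm $\|\Lambda_\alpha\|_{(H^s)^*}$ via Plancherel, which goes through precisely because the quadratic phase is unimodular and hence invisible to the $L^2$ norm.
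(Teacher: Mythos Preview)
Your proposal is correct and follows essentially the same route as the paper's own proof: the sufficiency direction is handled by the same two-step scheme (pointwise convergence on $C_0^\infty$ via step~I of Theorem~\ref{L2convergence}, together with the Cauchy--Schwarz bound $|G_\alpha f(u)|\le \|\hat f\|_{L^1}\le C_s\|f\|_{H^s}$ from Definition~\ref{ExpreesionOnFrequency}), and the necessity direction is the same uniform-boundedness plus Riesz-representation computation of $\|\Lambda_\alpha\|_{(H^s)^*}=\bigl(\int(1+\xi^2)^{-s}\,d\xi\bigr)^{1/2}$. Your explicit test functions $f_n$ at the end are a pleasant addition but not needed for the argument.
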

\begin{proof} Similar to the proof of Theorem \ref{WeightedSpace}, it suffices to show 
\begin{align*}
    \text{\uppercase\expandafter{\romannumeral1}}\quad &\forall u\in\mathbb{R},\ \forall f\in C^\infty_{0}(\mathbb{R}),\ \lim\limits_{\alpha\rightarrow\alpha_0}G_\alpha f(u)=G_{\alpha_0} f(u);\\
    \text{\uppercase\expandafter{\romannumeral2}}\quad &\forall u\in\mathbb{R},\ \exists C_0,\delta>0,\ \sup\limits_{\substack{f\in C^\infty_0(\mathbb{R})\\\alpha\in U(\alpha_0,\delta)}}\frac{|G_\alpha f(u)|}{\|f\|}\leqslant C_0,\ \text{where}\ \|f\|=\big(\int_{\mathbb{R}}|\hat{f}(\xi)|^2 (1+|\xi|^2)^{s}dt\big)^{\frac{1}{2}}.
\end{align*}

\textbf{For \uppercase\expandafter{\romannumeral1}}\quad By Theorem \ref{L2convergence} and the assumptions, we get \uppercase\expandafter{\romannumeral1}.

\textbf{For \uppercase\expandafter{\romannumeral2}}\quad By Definition \ref{ExpreesionOnFrequency}, for $s>\frac{1}{2},$ we have 
\begin{align*}
    |G_\alpha f(u)|&\leqslant\int_\mathbb{R}|\hat{f}(\xi)|\ d\xi\\
    &\leqslant \|f\|_{H^s(\mathbb{R})}\left(\int_{\mathbb{R}}(1+\xi^2)^{-s}d\xi\right)^{\frac{1}{2}}\\
    &\leqslant C_0\|f\|_{H^s (\mathbb{R})}.
\end{align*}
Conversely, if for all $f\in H^s(\mathbb{R}),\ \lim\limits_{\alpha\rightarrow\alpha_0}G_\alpha f(u_0)=G_{\alpha_0} f(u_0)$ holds. Then $\exists C>0$ such that $\|G_\alpha\|\leqslant C$. By the Riesz theorem again, we have 
$$\|G_\alpha\|=\Big\Vert(1+(\cdot)^2)^{-\frac{s}{2}}e^{i[(A(\alpha)(\cdot))^2+B(\alpha)u_0(\cdot)]}\Big\Vert_{L^2(\mathbb{R})}.$$
Thus $s>\frac{1}{2}$. 
This completes the proof of 
Theorem \ref{sobolev}
\end{proof}

We turn to the operator $L_a$ defined in \eqref{eq:Lalpha-limit} and consider its limit as $a\rightarrow0$. The space of H\"older continuous functions supported in $(c,d)$ will be denoted by $C^s_c (c,d)$, 
where $s$ is the H\"older exponent. 
Since 
    $C_c^s (c,d)\subset  H^s(\mathbb{R})$ (cf. \cite[\S1.4]{grafakosmodern}), 
    by Theorem \ref{sobolev} the pointwise convergence \eqref{eq:ptwise-conv} holds for all $f\in C_c ^s (c,d)$ with $s>1/2$. 

\medskip 

We will show that the condition $s>\frac{1}{2}$ is also optimal for H\"older continuous functions. 
First we state a positive result. 
\medskip 

\begin{theorem}\label{c1/2bounded}
    Suppose $b(a)$ is Lipschitz continuous on $[0,c_1]$, and $b(0)=1$. 
    Then there exists a constant $C_2>0$, such that 
    for all $R>|c|+|d|$, we have 
    $$\Big\Vert\sup\limits_{a\in[0,c_1]} |\tildeG_a f|\Big\Vert_{L^\infty[-R,R]}\leqslant C_2(1+|d-c|)R^\frac{1}{2} \|f\|_{C^{\frac{1}{2}}_c (c,d)}.$$
\end{theorem}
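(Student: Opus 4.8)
The plan is to reduce the maximal inequality to a pointwise bound on $|L_a f(u)|$ that is \emph{uniform in} $a\in[0,c_1]$, and then take suprema. For $a=0$ one has $L_0 f=f$, and since $f\in C^{1/2}_c(c,d)$ vanishes near $c$ and $d$, $\|f\|_{L^\infty(\mathbb R)}\leqslant|d-c|^{1/2}\|f\|_{C^{1/2}_c(c,d)}\leqslant(1+|d-c|)R^{1/2}\|f\|_{C^{1/2}_c(c,d)}$, using $|d-c|\leqslant|c|+|d|<R$. For $a\in(0,c_1]$, Lemma \ref{ExpressionOnTime2} (applied with $A(\alpha)=a$, $B(\alpha)=b(a)$, $D(\alpha)=(2\pi)^{-1/2}$), first for $f\in C_0^\infty(\mathbb R)$ and then by density for $f\in C^{1/2}_c(c,d)\subset L^1(\mathbb R)$, gives the pointwise representation
$$L_a f(u)=\frac{e^{i\pi/4}}{2\sqrt{\pi a}}\int_{\mathbb R}f(t)\,e^{-i(b(a)u-t)^2/(4a)}\,dt .$$
Writing $t_0:=b(a)u$ and $M:=\sup_{[0,c_1]}|b|$ (finite by continuity of $b$, so that $|t_0|\leqslant MR$ for $|u|\leqslant R$), it then suffices to prove
$$\Big|\frac1{\sqrt a}\int_{\mathbb R}f(t)\,e^{-i(t_0-t)^2/(4a)}\,dt\Big|\leqslant C\,R^{1/2}\,\|f\|_{C^{1/2}_c(c,d)}$$
with $C$ independent of $a$, $u$, $R$ and $f$; taking the supremum over $a\in[0,c_1]$ and then over $u\in[-R,R]$ yields the theorem.

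After the change of variables $s=t-t_0$, I would decompose the $s$-axis into the \emph{near} part $N=\{|s|<2\sqrt a\}$ and the dyadic shells $S_k=\{2^k\sqrt a\leqslant|s|<2^{k+1}\sqrt a\}$, $k\geqslant1$. Only shells meeting the translated support of $f$ contribute, and these satisfy $2^k\sqrt a<\max(|c-t_0|,|d-t_0|)\leqslant(M+1)R$, so the relevant range is $1\leqslant k\leqslant K$ with $2^{K}\leqslant(M+1)R/\sqrt a$, i.e.\ $2^{K/2}\leqslant\sqrt{(M+1)R}\,a^{-1/4}$. On $N$ one bounds crudely by $\int_{|s|<2\sqrt a}|f(t_0+s)|\,ds\leqslant 4\sqrt a\,\|f\|_{L^\infty}$. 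On a shell $S_k$ the phase $-s^2/(4a)$ has a monotone derivative of size comparable to $2^k a^{-1/2}$, so the van der Corput first--derivative test gives the \emph{local} Fresnel bound $\big|\int_{s_1}^{s_2}e^{-i\sigma^2/(4a)}\,d\sigma\big|\leqslant C\sqrt a\,2^{-k}$ for all $s_1,s_2\in S_k$ — a crucial improvement over the global Fresnel bound $\sqrt a$. Since $f$ is merely H\"older-$\tfrac12$ and need not be of bounded variation, one cannot integrate by parts against $f$ directly; instead I would regularize \emph{at the matched scale} $\ell_k:=\sqrt a\,2^{-k}$: with $\eta$ a fixed smooth bump and $g_k:=f*\eta_{\ell_k}$, one has $\|f-g_k\|_{L^\infty}\leqslant C[f]_{C^{1/2}}\,\ell_k^{1/2}$ and $\|g_k'\|_{L^\infty}\leqslant C[f]_{C^{1/2}}\,\ell_k^{-1/2}$. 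The rough part $\int_{S_k}(f-g_k)(t_0+s)\,e^{-is^2/(4a)}\,ds$ is then bounded by $\|f-g_k\|_{L^\infty}|S_k|\leqslant C[f]_{C^{1/2}}\,a^{3/4}2^{k/2}$, and the smooth part, integrated by parts against $\Phi_k(s):=\int_{2^k\sqrt a}^{s}e^{-i\sigma^2/(4a)}\,d\sigma$ (for which $\|\Phi_k\|_{L^\infty(S_k)}\leqslant C\sqrt a\,2^{-k}$ and one endpoint term vanishes), produces a boundary term $\leqslant C\|f\|_{L^\infty}\sqrt a\,2^{-k}$ and an interior term $\|\Phi_k\|_{L^\infty}\|g_k'\|_{L^\infty}|S_k|\leqslant C[f]_{C^{1/2}}\,a^{3/4}2^{k/2}$; the same bounds hold on the reflected half $\{s<0\}$ of each shell.

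Summing the shell bounds over $1\leqslant k\leqslant K$, the geometric series in $2^{k/2}$ is dominated by its largest term, so the total shell contribution is at most $C[f]_{C^{1/2}}\,a^{3/4}2^{K/2}+C\|f\|_{L^\infty}\sqrt a\leqslant C[f]_{C^{1/2}}\,a^{1/2}R^{1/2}+C\|f\|_{L^\infty}\sqrt a$. Adding the contribution of $N$ and dividing by $\sqrt a$ gives $|L_a f(u)|\leqslant C\big([f]_{C^{1/2}}R^{1/2}+\|f\|_{L^\infty}\big)\leqslant C\,R^{1/2}\|f\|_{C^{1/2}_c(c,d)}$, where at the last step one uses again $\|f\|_{L^\infty}\leqslant|d-c|^{1/2}[f]_{C^{1/2}}\leqslant R^{1/2}[f]_{C^{1/2}}$ together with $[f]_{C^{1/2}}\leqslant\|f\|_{C^{1/2}_c(c,d)}$; all constants depend only on $\eta$, the van der Corput constant and $M$, hence only on $b$ and $c_1$. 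This establishes the bound (and a fortiori the stated one with the extra factor $1+|d-c|$). The main obstacle throughout is the criticality of the exponent $\tfrac12$, which forbids both van der Corput's lemma with amplitude $f$ and integration by parts against $f$; the device that resolves it — and that produces precisely the factor $R^{1/2}$ with no blow-up as $a\to0$ — is the scale-by-scale regularization in which the choice $\ell_k=\sqrt a\,2^{-k}$ exactly balances the regularization error $[f]_{C^{1/2}}\ell_k^{1/2}$ against the oscillatory gain $2^{-k}$ of the Fresnel integral on the $k$-th shell.
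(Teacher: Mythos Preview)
Your argument is correct, and in fact it proves slightly more than the stated theorem: you never use the Lipschitz hypothesis on $b$, only its boundedness $M=\sup_{[0,c_1]}|b|$, and your final bound $CR^{1/2}\|f\|_{C^{1/2}_c(c,d)}$ does not carry the factor $(1+|d-c|)$.

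The paper takes a genuinely different route. Instead of centering the oscillatory integral at $t_0=b(a)u$, it centers at $u$ and splits $|L_af(u)|\leqslant\mathrm{I}+\mathrm{II}$, where $\mathrm{II}$ carries the correction $f(t+bu)-f(t+u)$; bounding $\mathrm{II}$ via $|f(t+bu)-f(t+u)|\leqslant[f]_{C^{1/2}}|(b-1)u|^{1/2}$ is where the Lipschitz condition on $b$ (to control $|b-1|/\sqrt a$) and the extra factor $|d-c|$ enter. For the main piece $\mathrm{I}$, rather than a dyadic decomposition and mollification, the paper partitions the range of integration by the \emph{phase periods} $t_k=\sqrt{8k\pi a}$ and writes $f(t+u)=(f(t+u)-f(t_k+u))+f(t_k+u)$ on each $[t_k,t_{k+1}]$: the first summand is handled by $C^{1/2}$ regularity and the length estimate $|t_{k+1}-t_k|\sim(a/k)^{1/2}$, the second by elementary Fresnel bounds $\big|\int_{t_k}^{t_{k+1}}e^{-it^2/(4a)}\,dt\big|$ computed by the substitution $t\mapsto t^2/(4a)$. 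Your scale-matched mollification (with $\ell_k=\sqrt a\,2^{-k}$ balancing the H\"older error against the van der Corput gain) is the more systematic device and yields the cleaner result; the paper's period partition is more elementary in that it avoids any smoothing and treats the Fresnel tail by hand, at the cost of needing the separate $\mathrm{II}$-term and hence the Lipschitz assumption.
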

\begin{proof}
For simplicity, denote $b(a)\triangleq b$ and $\|f\|_{C_c ^{\frac{1}{2}} (c,d)}\triangleq\|f\|$. By Lemma \ref{ExpressionOnTime2},
\begin{align*}
    \big|\tildeG_a f(u)\big|&=\big|\frac{1}{\sqrt{2a}}\int_{\mathbb{R}} f(t)e^{-i\frac{(b(a)u-t)^2}{4a}}\ dt\big|\\
    &=\big|\frac{1}{\sqrt{2a}}\int_{c-b(a)u}^{d-b(a)u}f(t+bu)e^{-i\frac{t^2}{4a}}\ dt|\\
    &\leqslant |2a|^{-\frac{1}{2}}\big|\int_{c-bu}^{d-bu}f(t+u)e^{-i\frac{t^2}{4a}}\ dt\big|\\
    &+|2a|^{-\frac{1}{2}}\big|\int_{c-bu}^{d-bu}\big(f(t+bu)-f(t+u)\big)e^{-i\frac{t^2}{4a}}\ dt\big|\\
    &\triangleq \uppercase\expandafter{\romannumeral1}+\uppercase\expandafter{\romannumeral2}. 
\end{align*}

When $u\ne 0$, without loss of generality, suppose the lower limit of the integral above satisfies $c-bu\geqslant0\ ($if $c-bu<0$, we can divide $\uppercase\expandafter{\romannumeral1}$ into $\int_{c-bu}^{0}$ and $\int_{0}^{d-bu})$. Now we divide $\uppercase\expandafter{\romannumeral1}$ into several parts. Let $t_k=\sqrt{8k\pi a}\ (k=n_1+1,n_1+2,\cdots,n_2;\ n_1,n_2>0),\ t_{n_1}=c-bu\in[\sqrt{8n_1\pi a},\sqrt{8(n_1+1)\pi a}),\ t_{n_2+1}=d-bu\in(\sqrt{8n_2\pi a},\sqrt{8(n_2+1)\pi a}]$.
\begin{align*}
    \uppercase\expandafter{\romannumeral1}&\leqslant|2a|^{-\frac{1}{2}}\sum_{k=n_1}^{n_2} |\int_{t_k}^{t_{k+1}}(f(t+u)-f(t_k+u))e^{-i\frac{t^2}{4a}}\ dt\big|\\
    &+|2a|^{-\frac{1}{2}}\sum_{k=n_1}^{n_2}\big|f(t_k+u)\int_{t_k}^{t_{k+1}}e^{-i\frac{t^2}{4a}}\ dt\big|.\\
    &\triangleq\uppercase\expandafter{\romannumeral1}^{(1)}+\uppercase\expandafter{\romannumeral1}^{(2)}.\\
    \uppercase\expandafter{\romannumeral1}^{(1)}&\lesssim |a|^{-\frac{1}{2}}\|f\|\sum_{k=n_1}^{n_2}|t_{k+1}-t_k|^{\frac{3}{2}}\lesssim|a|^{\frac{1}{4}}\|f\|\sum_{k=n_1+1}^{n_2+1}k^{-\frac{3}{4}}\lesssim R^{\frac{1}{2}}\|f\|.\\ 
    \uppercase\expandafter{\romannumeral1}^{(2)}&\lesssim\|f\| \sum_{k=n_1}^{n_2}\Big|\int_{\frac{t_{k+1} ^2}{4a}}^{\frac{t_{k} ^2}{4a}}e^{-it}\frac{1}{\sqrt{t}}\ dt \Big|. 
\end{align*}

Now divide $\sum\limits_{k=n_1}^{n_2}\Big|\int_{\frac{t_{k+1} ^2}{4a}}^{\frac{t_{k} ^2}{4a}}e^{-it}\frac{1}{\sqrt{t}}\ dt \Big|$ into three parts:\\ 
(1)\ Using integration by parts, 
$$\sum\limits_{k=n_1+1}^{n_2-1}\Big|\int_{2k\pi}^{2(k+1)\pi}e^{-it}\frac{1}{\sqrt{t}}\ dt \Big|\lesssim\sum\limits_{k=n_1+1}^{n_2-1}(\frac{1}{\sqrt{k}}-\frac{1}{\sqrt{k+1}})\leqslant(n_1+1)^{-\frac{1}{2}}\leqslant1;$$\\
(2)\ $\Big|\int_{\frac{(c-bu)^2}{4a}}^{2(n_1+1)\pi}e^{-it}\frac{1}{\sqrt{t}}\ dt\Big|$: When $n_1\leqslant\ 3,\ \Big|\int_{\frac{(c-bu)^2}{4a}}^{2(n_1+1)\pi}\Big|\leqslant\int_0^{20}t^{-\frac{1}{2}}dt$; Otherwise, \\$\Big|\int_{\frac{(c-bu)^2}{4a}}^{2(n_1+1)\pi}\Big|\lesssim\int_{2n_1\pi}^{2(n_1+1)\pi}t^{-\frac{1}{2}}dt\lesssim n_1 ^{-\frac{1}{2}}\leqslant\frac{1}{\sqrt{3}}$;\\
(3)\ $\Big|\int_{2n_2 \pi a}^{\frac{(d-bu)^2}{4a}}e^{-it}\frac{1}{\sqrt{t}}\ dt\Big|$: Similar to (2). 
~\\
Thus, by the Lipschitz continuity of $b(a)$, 
we have 
$$\ \uppercase\expandafter{\romannumeral2}\lesssim |a|^{-\frac{1}{2}}|(d-c)^2(b-1)u|^{\frac{1}{2}}\|f\|\lesssim |d-c|R^{\frac{1}{2}}\|f\|. $$
Since $f\in C^{1/2}_c(c,d)$, it is easy to verify that 
$\tildeG_a f(u)$ is continuous with respect to $u$. 
Therefore the case $u=0$ follows immediately from the bound for $u\neq0$. 
This completes the proof of Theorem \ref{c1/2bounded}. 
\end{proof}
\medskip 

Now we state the negative result. 
\medskip

\begin{proposition}\label{c1/2counterexample}
    Suppose $|b(a)|\leqslant M$ and $b(0)=1$. Then $\exists\  f\in C^{\frac{1}{2}}_c [\frac{1}{4},\frac{1}{2}]$ and $u_0\in (0,\frac{1}{80M})$, such that 
    $$\lim\limits_{a\rightarrow0}\tildeG_a f(u_0)\ne\tildeG_0 f(u_0).$$
\end{proposition}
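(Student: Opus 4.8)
The plan is to construct $f$ as a lacunary-type sum of bumps whose phases, when tested against the oscillatory kernel $e^{-it^2/4a}$ at a suitable sequence $a=a_j\to 0$, all add up constructively, so that $|\tildeG_{a_j} f(u_0)|$ does not stay bounded (or at least does not converge to $\tildeG_0 f(u_0)$). First I would recall from Lemma \ref{ExpressionOnTime2} that on $C_0^\infty$,
\begin{equation*}
\tildeG_a f(u)=\frac{1}{\sqrt{2a}}\int_{\mathbb R} f(t)\,e^{-i\frac{(b(a)u-t)^2}{4a}}\,dt ,
\end{equation*}
and note that near $u=u_0$ with $u_0$ small and $t\in[\tfrac14,\tfrac12]$ the quantity $b(a)u_0-t$ stays bounded away from $0$ and comparable to $t$ up to a small perturbation; since $b(0)=1$ and $|b(a)|\le M$, for $a$ small the kernel behaves essentially like $e^{-i(u_0-t)^2/4a}$. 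The heart of the matter is a van-der-Corput / stationary-phase heuristic in reverse: scaling by $\sqrt{a}$, one sees that $\sqrt{a}\,|\tildeG_a f(u_0)|$ is morally $\big|\int f(u_0+\sqrt{a}\,s)e^{-is^2/4}\,ds\big|$, so a bump of height $h$ and width $w$ contributes roughly $h\min(w,\sqrt a)$; dividing by $\sqrt a$ gives a contribution $\sim h$ when $w\gtrsim\sqrt a$, i.e. each dyadic scale $w\sim 2^{-k}$ contributes $\sim h_k$ as long as $a\lesssim 2^{-2k}$.

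Concretely I would take
\begin{equation*}
f(t)=\sum_{k\ge k_0} 2^{-k/2}\,\varphi\!\big(2^{k}(t-t_k)\big),
\end{equation*}
where $\varphi$ is a fixed smooth bump supported in $[0,1]$, the centers $t_k$ are chosen lacunarily inside $[\tfrac14,\tfrac12]$ so the supports are disjoint, and the prefactor $2^{-k/2}$ (matched to width $2^{-k}$) is exactly the normalization that puts $f$ in $C^{1/2}_c[\tfrac14,\tfrac12]$: each term has $C^{1/2}$-norm $O(1)$, and disjointness of supports together with the lacunarity of the $t_k$ keeps $\|f\|_{C^{1/2}}$ finite (this is the standard Weierstrass-type construction, cf.\ the discussion after Theorem \ref{c1/2bounded}). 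By the heuristic above, at scale $a\sim 2^{-2N}$ the $N$-th bump contributes $\gtrsim c>0$ (choosing the phase/argument of $\varphi$, or a modulation $e^{i\theta_k t}$, so that the $N$-th term's stationary-phase output has a fixed favorable argument), while I must show the tails $k<N$ and $k>N$ do not cancel it: for $k>N$ (narrower bumps, $w\ll\sqrt a$) each contributes $\lesssim h_k\,w_k/\sqrt a=2^{-k/2}2^{-k}2^{N}$, geometrically summable and small once $N$ is large relative to $k_0$ is replaced by summing from $k>N$; for $k<N$ (wider bumps, $w\gg\sqrt a$) one uses a genuine stationary-phase estimate with error $O(\sqrt a\cdot\text{(derivative bounds of }f\text{ on that bump)})$, and since on the $k$-th bump $\|f''\|\sim 2^{3k/2}$, the error from scale $k$ is $\lesssim 2^{-2N}2^{3k/2}\cdot(\sqrt a)^{-1}=2^{-N}2^{3k/2}$, summable over $k<N$ and $\to 0$. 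Hence $\limsup_{a\to 0}|\tildeG_a f(u_0)|\ge c>0$ along $a=2^{-2N}$, whereas $\tildeG_0 f(u_0)=f(u_0)=0$ since $u_0\notin[\tfrac14,\tfrac12]$; thus $\lim_{a\to 0}\tildeG_a f(u_0)\neq \tildeG_0 f(u_0)$.

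I would finish by pinning down $u_0\in(0,\tfrac{1}{80M})$: this range guarantees $|b(a)u_0|<\tfrac{1}{80}$ for all $a\in[0,c_1]$ (since $|b(a)|\le M$), hence $c-b(a)u_0 \ge \tfrac14-\tfrac1{80}>0$, so the substitution $t\mapsto t+b(a)u_0$ in the proof of Theorem \ref{c1/2bounded} keeps the integration interval in the positive half-line and the kernel monotone in phase — exactly the regime where the van der Corput bounds and the stationary-phase expansion are clean. The main obstacle is the middle range $k<N$: one needs a stationary-phase expansion for $\int \psi(s)e^{-is^2/4}ds$ with error controlled by a low-order derivative norm of $\psi$ (second derivative suffices after one integration by parts away from the critical point, which here is absent since $u_0\notin\mathrm{supp}\,f$, making the phase $-(u_0-t)^2/4a$ nonstationary on the support — so in fact integration by parts alone, as in the proof of Theorem \ref{c1/2bounded}, gives the needed decay, and the real work is bookkeeping the dyadic sum so the chosen term dominates). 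Everything else is the routine verification that $f\in C^{1/2}_c[\tfrac14,\tfrac12]$ and that the geometric series converge.
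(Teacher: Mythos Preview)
Your central claim—that at $a\sim 2^{-2N}$ the $N$-th bump contributes $\gtrsim c>0$—is false, and in fact your $f$ is \emph{not} a counterexample. The heuristic ``a bump of height $h$ and width $w$ contributes $\sim h$ once $w\gtrsim\sqrt a$'' is only valid when the stationary point of the phase lies inside the bump. Here the phase is $-(b(a)u_0-t)^2/4a$, whose unique critical point is $t=b(a)u_0\in(-\tfrac{1}{80},\tfrac{1}{80})$, while every bump sits in $[\tfrac14,\tfrac12]$; so the phase is nonstationary on \emph{every} bump, with $|\partial_t\Phi|\sim 1/a$ there. One integration by parts (exactly the mechanism you invoke for the ``$k<N$'' tail) then gives, for the $k$-th bump,
\[
\Bigl|\tfrac{1}{\sqrt{2a}}\int 2^{-k/2}\varphi(2^k(t-t_k))\,e^{-i(b(a)u_0-t)^2/4a}\,dt\Bigr|
\ \lesssim\ \frac{2^{-k/2}}{\sqrt a}\cdot a\ =\ 2^{-k/2}\sqrt a,
\]
since $\|\varphi(2^k\cdot)\|_\infty+\|\partial_t\varphi(2^k\cdot)\|_{L^1}=O(1)$. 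Summing over $k$ gives $|L_a f(u_0)|\lesssim\sqrt a\to 0=f(u_0)=L_0 f(u_0)$. The parenthetical fix you float (``a modulation $e^{i\theta_k t}$'') is not cosmetic: to create a stationary point in the $k$-th bump one needs $\theta_k\sim 1/a_k$, which changes the $C^{1/2}$ bookkeeping completely and, if one tracks constants, forces the diagonal contribution to be $\lesssim\sqrt{a_k}\to0$ anyway. Disjoint spatial bumps, with or without linear modulations, do not produce resonance here.

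The paper's construction supplies the missing idea: build the resonance into $f$ by using \emph{chirps} that exactly cancel the kernel's phase along a lacunary sequence. With $a_k=2^{-k}$ and $b_k=b(a_k)$ one takes
\[
f(t)=\phi(t)\sum_{k\ge1}\sqrt{2a_k}\,e^{\,i(b_k u_0-t)^2/(4a_k)},\qquad \phi\in C_0^\infty,\ \mathrm{supp}\,\phi\subset[\tfrac14,\tfrac12].
\]
Then in $L_{a_n}f(u_0)$ the $k=n$ term gives the non-oscillatory $\int\phi\neq0$, while the $k\neq n$ terms are killed by integration by parts because the combined phase $\frac{(b_ku_0-t)^2}{4a_k}-\frac{(b_nu_0-t)^2}{4a_n}$ has derivative of size $|2^{k}-2^{n}|$ on $[\tfrac14,\tfrac12]$ (this is where $u_0<\tfrac{1}{80M}$ is used). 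The $C^{1/2}$ verification is the same dyadic split you describe (derivative bound for $k\le N$, trivial bound for $k>N$), but now the $k$-th term has amplitude $2^{-k/2}$ and \emph{phase frequency} $\sim 2^k$, which is exactly the balance needed.
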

\begin{proof}
Let $a_k=2^{-k},\ b_k=b(a_k)$. Take $f(t)=\sum\limits_{k=1}^{+\infty}\sqrt{2a_k}\ e^{i\frac{(b_k u_0-t)^2}{4a_k}}\phi(t)$, where $\phi\in C^\infty_{0}(\mathbb{R})$ and $supp\ \phi\subset [\frac{1}{4},\frac{1}{2}]$. It is easy to see that $f\in C[0,1]$. Let $0<u_0<\frac{1}{80M}$. Then  
\begin{equation*}
    |\tildeG_{a_n}f(u_0)|=\Big|\sum\limits_{k=1}^{+\infty}\sqrt{\frac{a_k}{a_n}}\int_{\mathbb{R}}\phi(t)\ e^{i[\frac{(b_k u_0-t)^2}{4a_k}-\frac{(b_n u_0-t)^2}{4a_n}]}\ dt\Big|.
\end{equation*}
Now divide $\tildeG_{a_n}f(u_0)$ into three parts.

~\\
(1)\ $k=n:\quad \int_{\mathbb{R}}\phi(t)\ dt\ne0.$\\
(2)\ $k<n:$ 
\begin{align*}
    &\Big|\sum\limits_{k=1}^{n-1}2^{\frac{n-k}{2}}\int_{\mathbb{R}}\phi(t)e^{i[2^{k-2}(b_k u_0-t)^2-2^{n-2}(b_n u_0-t)^2]}\ dt\Big|\\
    \lesssim&\sum\limits_{k=1}^{n-1}\int_{\mathbb{R}}\Big|\frac{\phi'(t)}{2^{k-1}(b_k u_0-t)-2^{n-1}(b_n u_0-t)}\Big|\ dt\\
    +&\int_{\mathbb{R}}\frac{|\phi'(t)|\ |2^{k-1}-2^{n-1}|}{|2^{k-1}(b_k u_0-t)-2^{n-1}(b_n u_0-t)|^2}\ dt\quad\text{(using integration by parts)} \\
    \lesssim&\sum\limits_{k=1}^{n-1} \frac{2^{\frac{n-k}{2}}}{|2^{k-1}-2^{n-1}|}\\
    \lesssim&\sum\limits_{k=1}^{n-1} 2^{\frac{n-k}{2}}2^{-n}\\
    \lesssim&2^{-\frac{n}{2}}\rightarrow0,\quad \text{as}\ n\rightarrow+\infty.
\end{align*}
(3)\ $k>n:$ 
\begin{align*}
    &\Big|\sum\limits_{k=n+1}^{+\infty}2^{\frac{n-k}{2}}\int_{\mathbb{R}}\phi(t)e^{i[2^{k-2}(b_k u_0-t)^2-2^{n-2}(b_n u_0-t)^2]}\ dt\Big|\\
    \lesssim&\sum\limits_{k=n+1}^{+\infty} \frac{2^{\frac{n-k}{2}}}{|2^{k-1}-2^{n-1}|}\\
    \lesssim&\sum\limits_{k=n+1}^{+\infty}2^{\frac{n-k}{2}}2^{-k}\\
    \lesssim&\frac{1}{2^n}\sum\limits_{l=1}^{+\infty}2^{-\frac{l}{2}}\rightarrow0,\quad \text{as}\ n\rightarrow+\infty.
\end{align*}
Since $\tildeG_0 f(u_0)=0$, we get $\lim\limits_{n\rightarrow+\infty}\tildeG_{a_n}f(u_0)\ne\tildeG_0 f(u_0)$. 
It remains to show $f\in C^{\frac{1}{2}}_c [\frac{1}{4},\frac{1}{2}]$, that is, $\exists C>0$, such that $\forall t_1,\ t_2\in [\frac{1}{4},\frac{1}{2}],$ we have 
$$|f(t_1)-f(t_2)|\leqslant C|t_1-t_2|^{\frac{1}{2}}.$$
Since $t_1\ne t_2,\ \exists N>0,$ such that $2^{-(N+1)}<|t_1-t_2|\leqslant2^{-N}.$ Now 
\begin{align*}
    |f(t_1)-f(t_2)|\lesssim&\sum\limits_{k=1}^{+\infty}|\sqrt{2a_k}|\ \big|e^{i\frac{(b_k u_0-t_1)^2}{4a_k}}\phi(t_1)-e^{i\frac{(b_k u_0-t_2)^2}{4a_k}}\phi(t_2)\big|\\
    =&\big(\sum\limits_{k\leqslant N}+\sum\limits_{k>N}\big)|\sqrt{2a_k}|\ \big|e^{i\frac{(b_k u_0-t_1)^2}{4a_k}}\phi(t_1)-e^{i\frac{(b_k u_0-t_2)^2}{4a_k}}\phi(t_2)\big|\\
    \triangleq&\uppercase\expandafter{\romannumeral1}+\uppercase\expandafter{\romannumeral2},
    \end{align*}
    where 
    \begin{align*}    \uppercase\expandafter{\romannumeral1}&\lesssim\sum\limits_{k\leqslant N}2^{-\frac{k}{2}}(2^k+1)\ |t_1-t_2|\qquad\text{(by the mean value theorem)}\\
    &\lesssim2^{\frac{N}{2}}|t_1-t_2|\lesssim|t_1-t_2|^{\frac{1}{2}},\\
    \uppercase\expandafter{\romannumeral2}&\lesssim\sum\limits_{k>N}2^{-\frac{k}{2}}\\
    &\lesssim2^{-\frac{N}{2}}\lesssim|t_1-t_2|^{\frac{1}{2}}.
\end{align*}
Thus $f\in C^{\frac{1}{2}}_c [\frac{1}{4},\frac{1}{2}].$ 
This completes the proof of Proposition \ref{c1/2counterexample}. 
\end{proof}

\section{Almost-everywhere convergence}
\label{sec:ae-conv}
In this section we consider the almost-everywhere convergence of $L_af$ as $a\rightarrow a_0$. 
\medskip 

The following result is a special case of \cite[Theorem 1.3]{Cho2012} and \cite[Theorem 1.3]{Ding2017}. Here we give a simplified proof for the specific operator $L_a$. 
(For related results in higher dimensions, we refer to 
\cite{Du2017}, \cite{Du2019}, \cite{Li2021}, and references therein.) 
\medskip 

\begin{proposition}\label{a.e.}
    Suppose $b(a)$ is Lipschitz continuous near $a=a_0$. Then for all 
$f\in H^s (\mathbb{R})$ with $s\geqslant\frac{1}{4}$, we have $$\lim\limits_{a\rightarrow a_0}\tildeG_a f(u)=\tildeG_{a_0} f(u),\quad a.e. \ u\in\mathbb R.\footnote{For more quantitative results concerning the rate of convergence, see \cite{Cao2018}.}$$
\end{proposition}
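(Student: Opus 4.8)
The plan is to reduce the statement to a \emph{local} $L^2$ bound for the maximal function of $\tildeG_a$ near the limiting parameter and then run the standard density argument. We may take $a_0=0$; since $b(a)\to1$ as $a\to0$ and the dilation $u\mapsto u/b(0)$ in the output variable preserves null sets while replacing $b$ by the still-Lipschitz function $b/b(0)$, we may further assume $b(0)=1$, so that by Fourier inversion $\tildeG_0 f=f$. Replacing $f$ by $\bar f$ and $b(a)$ by $b(-a)$ interchanges the one-sided limits, so it is enough to show $\lim_{a\to0^+}\tildeG_a f(u)=f(u)$ for a.e.\ $u$. (The case $a_0\ne0$ reduces to $a_0=0$ in the same way, after absorbing the unit-modulus factor $e^{ia_0v^2}$ into $\hat f$ and rescaling $u$.) Fixing $R>0$ and noting that the relevant $\limsup$ as $a\to0^+$ only feels arbitrarily small $a$, the heart of the matter is the estimate
\begin{equation}\label{eq:aeplan-max}
\Big\Vert\sup_{0<a\le\delta_R}|\tildeG_a f|\Big\Vert_{L^2[-R,R]}\le C_R\,\|f\|_{H^{1/4}(\mathbb{R})}\qquad(f\in\mathcal{S}(\mathbb{R})),
\end{equation}
valid once $\delta_R>0$ is chosen small enough (depending on $R$). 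Granting \eqref{eq:aeplan-max}: given $f\in H^s(\mathbb{R})$ with $s\ge1/4$ and $\varepsilon>0$, pick $g\in\mathcal{S}$ with $\|f-g\|_{H^{1/4}}<\varepsilon$; since $\hat g\in L^1$, dominated convergence gives $\tildeG_a g(u)\to g(u)$ for every $u$, whence $\limsup_{a\to0^+}|\tildeG_a f(u)-f(u)|\le\sup_{0<a\le\delta_R}|\tildeG_a(f-g)(u)|+|f(u)-g(u)|$, and by \eqref{eq:aeplan-max} and Chebyshev the set of $u\in[-R,R]$ on which the left side exceeds $\lambda$ has measure $\lesssim_R(\varepsilon/\lambda)^2$; letting $\varepsilon\to0$ and then $R\to\infty$ finishes the proof. (Measurability of the supremum and the passage from $\mathcal{S}$ to $H^{1/4}$ in \eqref{eq:aeplan-max} are routine, e.g.\ via the linearization below together with the fact that $\tildeG_a f\to f$ in $L^2(\mathbb{R})$ by Theorem \ref{L2convergence}, so some sequence $a_n\to0^+$ already gives a.e.\ convergence.)

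To prove \eqref{eq:aeplan-max} we linearize: let $a:[-R,R]\to(0,\delta_R]$ be measurable, set $Tf(u)=\tildeG_{a(u)}f(u)$, and bound $\|Tf\|_{L^2[-R,R]}\le C_R\|f\|_{H^{1/4}}$ uniformly in $a(\cdot)$. Writing $g(v)=(1+v^2)^{1/8}\hat f(v)$, so $\|g\|_{L^2}=\|f\|_{H^{1/4}}$, we have $Tf(u)=\int_\mathbb{R}\mathcal{K}(u,v)g(v)\,dv$ with $\mathcal{K}(u,v)=\tfrac{1}{\sqrt{2\pi}}(1+v^2)^{-1/8}e^{i(b(a(u))uv+a(u)v^2)}$, and it suffices to show $TT^\ast:L^2[-R,R]\to L^2[-R,R]$ has norm $\lesssim_R1$. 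Its kernel is
\begin{equation}\label{eq:aeplan-kernel}
\mathcal{H}(u,u')=\frac{1}{2\pi}\int_\mathbb{R}(1+v^2)^{-1/4}e^{i(Pv+Qv^2)}\,dv,\qquad P=b(a(u))u-b(a(u'))u',\quad Q=a(u)-a(u').
\end{equation}

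Two facts then close the argument. First, a one-dimensional oscillatory-integral bound that exploits the quadratic phase: the rescaling $v\mapsto|Q|^{-1/2}v$ reduces \eqref{eq:aeplan-kernel} to a universal Fresnel-type integral $\int|w|^{-1/2}e^{i(\mu w\pm w^2)}\,dw$, bounded uniformly in $\mu$ by van der Corput's lemma, and combining with the elementary estimate at $Q=0$ gives $|\mathcal{H}(u,u')|\lesssim(|P|+|Q|^{1/2})^{-1/2}$. Second, the Lipschitz hypothesis lets us recover $|u-u'|$ from $P$: with $C_b$ the Lipschitz constant of $b$, $|b(a(u))-b(a(u'))|\le C_b|Q|$ and $b(a(u'))\in[\tfrac12,\tfrac32]$ on $(0,\delta_R]$, so $P=b(a(u'))(u-u')+O(R\,C_b|Q|)$ and hence $|u-u'|\lesssim|P|+R\,C_b|Q|$; choosing $\delta_R\lesssim(R\,C_b)^{-2}$ forces $R\,C_b|Q|\le|Q|^{1/2}$ for all $|Q|\le2\delta_R$, so $|u-u'|\lesssim|P|+|Q|^{1/2}$ and therefore $|\mathcal{H}(u,u')|\lesssim|u-u'|^{-1/2}$. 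Since $|u-u'|^{-1/2}$ is locally integrable in one dimension, Schur's test yields $\sup_u\int_{[-R,R]}|\mathcal{H}(u,u')|\,du'\lesssim R^{1/2}$, and likewise with $u,u'$ exchanged, so $\|TT^\ast\|\lesssim R^{1/2}$, $\|T\|\lesssim R^{1/4}$, and \eqref{eq:aeplan-max} holds with $C_R\lesssim R^{1/4}$.

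The main obstacle is precisely this last point: the kernel \eqref{eq:aeplan-kernel} decays in the \emph{pair} $(P,Q)$, whereas what one must integrate is $|u-u'|^{-1/2}$, and $P$ blends $u-u'$ with the oscillation of $b\circ a$; converting this blend into an inequality is exactly where Lipschitz regularity of $b$ (not mere continuity) enters, and it forces the estimate to be local in $u$ with $a$ confined to $(0,\delta_R]$, $\delta_R\lesssim(R\,C_b)^{-2}$. It is also why the argument reaches the endpoint $s=1/4$ with no loss: the Schur kernel $|u-u'|^{-1/2}$ is genuinely — not merely "almost" — locally integrable, so no dyadic frequency decomposition and no $\varepsilon$-room are needed, the quadratic phase being used once to obtain the clean scaling bound on the oscillatory integral and once to ensure that $P$ and $Q$ are the only parameters in play.
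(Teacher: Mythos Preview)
Your proof is correct and follows essentially the same route as the paper's: reduce to a local maximal estimate, linearize, apply $TT^\ast$ (the paper phrases this as duality against a test function $g$ and then squaring, which produces the same kernel $\int(1+v^2)^{-1/4}e^{i(Pv+Qv^2)}\,dv$), bound this oscillatory integral, use the Lipschitz hypothesis on $b$ to convert the $(P,Q)$-decay into $|u-u'|^{-1/2}$, and finish with Schur/Young. The only cosmetic differences are that you state the oscillatory bound in the sharper form $(|P|+|Q|^{1/2})^{-1/2}$ and compensate by restricting $a$ to $(0,\delta_R]$ with $\delta_R\lesssim(RC_b)^{-2}$, whereas the paper uses the cruder bound $(|P|^2+|Q|^2)^{-1/4}$ from Lemma~\ref{LemmaIntegral} and a slightly more involved case analysis on $[c,d]^2$ to recover $|u_1-u_2|^{-1/2}$; both close the argument at the endpoint $s=1/4$ without loss.
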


Before proving Proposition \ref{a.e.}, we first recall that the almost-everywhere convergence of $\tildeG_a f$ is implied by boundedness of the corresponding maximal operators 
$$\tildeG_*^{+} f(u)\triangleq\sup\limits_{a\in (a_0,a_0+\delta)} |\tildeG_a f(u)|$$
and 
$$\tildeG_{*}^- f(u)\triangleq\sup\limits_{a\in (a_0-\delta,a_0)} |\tildeG_a f(u)|.$$ 
For simplicity, we will only consider $\tildeG_* f\triangleq\tildeG_{*}^+ f$. 
\medskip

\begin{lemma}\label{lem:1}
    Let $E\subset\mathbb R$. Suppose 
    \begin{align*}
        &(1)\lim\limits_{a\rightarrow a_0} b(a)=b(a_0);\\
        &(2)\ \exists\ C>0,\ \text{such that }  
        \|\tildeG_* f\|_{L^{1,\infty}(E)}\leqslant C\|f\|_{H^s (\mathbb{R})}.
        \end{align*}
Then for all $f\in H^s (\mathbb{R})$, we have  $$\lim\limits_{a\rightarrow a_0}\tildeG_a f(u)=\tildeG_{a_0} f( u),\quad a.e.\ u\in E.$$
    
\end{lemma}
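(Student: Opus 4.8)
The plan is to run the standard Stein-type argument that deduces almost-everywhere convergence from a weak-type maximal estimate together with everywhere convergence on a dense subclass. Throughout I regard $\tildeG_a=L_a$ as the operator $G_\alpha$ of Definition~\ref{ExpreesionOnFrequency} with coefficients $A(\alpha)=a$, $B(\alpha)=b(a)$, $D(\alpha)=1$; hypothesis~(1) says precisely that $B$ is continuous at $a_0$ ($A$ being trivially so). \textbf{Step 1 (dense class).} I take $C_c^\infty(\mathbb R)$, which is dense in $H^s(\mathbb R)$ and contained in $H^\sigma(\mathbb R)$ for every $\sigma>\tfrac12$. Thus Theorem~\ref{sobolev} applies and gives, for each $g\in C_c^\infty(\mathbb R)$ and each $u\in\mathbb R$, $\lim_{a\to a_0}\tildeG_a g(u)=\tildeG_{a_0}g(u)$; equivalently, the Cauchy oscillation $\widetilde\Omega g(u):=\limsup_{a',a''\to a_0^{+}}\big|\tildeG_{a'}g(u)-\tildeG_{a''}g(u)\big|$ vanishes for every $u$. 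I prefer to work with $\widetilde\Omega$ rather than with $\limsup_{a\to a_0^{+}}|\tildeG_a f(u)-\tildeG_{a_0}f(u)|$ because, for general $f\in H^s$, $\tildeG_{a_0}f$ is only defined almost everywhere.

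\textbf{Step 2 (oscillation estimate).} Given $f\in H^s(\mathbb R)$ and $\varepsilon>0$, choose $g\in C_c^\infty(\mathbb R)$ with $\|f-g\|_{H^s}<\varepsilon$. By linearity and subadditivity of $\widetilde\Omega$, the identity $\widetilde\Omega g\equiv0$, and the crude bound $|\tildeG_{a'}h(u)-\tildeG_{a''}h(u)|\le 2\tildeG_* h(u)$ (valid once $a',a''$ lie in $(a_0,a_0+\delta)$), we get $\widetilde\Omega f(u)=\widetilde\Omega(f-g)(u)\le 2\tildeG_*(f-g)(u)$ for all $u\in E$. Hypothesis~(2) and Chebyshev's inequality then give, for every $\lambda>0$, $\big|\{u\in E:\widetilde\Omega f(u)>\lambda\}\big|\le\big|\{u\in E:\tildeG_*(f-g)(u)>\tfrac{\lambda}{2}\}\big|\le\tfrac{2C}{\lambda}\|f-g\|_{H^s}<\tfrac{2C\varepsilon}{\lambda}$. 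Since $\varepsilon$ is arbitrary, $\widetilde\Omega f=0$ a.e. on $E$, so $\Lambda f(u):=\lim_{a\to a_0^{+}}\tildeG_a f(u)$ exists for a.e. $u\in E$.

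\textbf{Step 3 (identifying the limit).} It remains to show $\Lambda f=\tildeG_{a_0}f$ a.e. on $E$. For this I invoke Theorem~\ref{L2convergence}: continuity of $A,B$ at $a_0$ gives $\tildeG_a f\to\tildeG_{a_0}f$ in $L^2(\mathbb R)$ as $a\to a_0^{+}$, so some sequence $a_n\to a_0^{+}$ satisfies $\tildeG_{a_n}f\to\tildeG_{a_0}f$ a.e. on $\mathbb R$; comparing with $\tildeG_{a_n}f(u)\to\Lambda f(u)$ a.e. on $E$ forces $\Lambda f=\tildeG_{a_0}f$ a.e. on $E$. This proves $\lim_{a\to a_0^{+}}\tildeG_a f(u)=\tildeG_{a_0}f(u)$ for a.e. $u\in E$; the same argument with $\tildeG_*^{-}$ in place of $\tildeG_*$ gives the left-hand limit, and the two combine to the two-sided conclusion. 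Apart from invoking hypothesis~(2), the argument is entirely routine, so there is no real obstacle; the one point requiring care is that $\tildeG_{a_0}f$ is merely an a.e.-defined $L^2$ function for $f\in H^s$, which is why the oscillation is built from the Cauchy differences $\tildeG_{a'}f-\tildeG_{a''}f$ and the identification of the limit is deferred to Step~3 via $L^2$-convergence.
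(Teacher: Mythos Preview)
Your proof is correct and is precisely the standard density argument the paper invokes: the paper's own proof of this lemma is a one-line citation to \cite{stein1970singular}, and you have supplied exactly the details such a citation points to. One minor remark: you correctly handle the one-sided nature of $\tildeG_*=\tildeG_*^{+}$ and note that the two-sided conclusion requires the analogous hypothesis on $\tildeG_*^{-}$, a point the paper glosses over.
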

\begin{proof}
The proof follows from a standard density argument, cf. \cite[\S 1.1]{stein1970singular}. 
\end{proof}
\medskip

To bound the maximal function $\tildeG_* f$, 
we use the Kolmogorov-Seliverstov-Plessner method (cf. \cite{Carleson1980}). 
\medskip

\begin{lemma}\label{euqivlemma}
    Let $E\subset\mathbb R$. Then 
    $$\exists C>0,\ \|\tildeG_* f\|_{L^{1,\infty} (E)}\leqslant C\|f\|_{H^s (\mathbb{R})}\Leftrightarrow\exists\tilde{C}>0,\ \sup\limits_{\{a(\cdot)\}}\|\tildeG_{\{a(\cdot)\}} f\|_{L^{1,\infty} (E)}\leqslant\tilde{C}\|f\|_{H^s (\mathbb{R})},$$
where $a(u)$ is an arbitrary function whose value lies in $(a_0,a_0+\delta)$. 
\end{lemma}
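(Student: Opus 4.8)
The plan is to prove the equivalence in Lemma~\ref{euqivlemma} by a linearization (Kolmogorov–Seliverstov–Plessner) argument. The nontrivial direction is ``$\Rightarrow$'': if the sublinear maximal operator $\tildeG_*$ is bounded from $H^s(\mathbb R)$ into the weak space $L^{1,\infty}(E)$, then the bound persists, uniformly, for every measurable linearization $\tildeG_{\{a(\cdot)\}}f(u)=\tildeG_{a(u)}f(u)$ with $a(u)\in(a_0,a_0+\delta)$. This direction is essentially immediate from the pointwise inequality $|\tildeG_{a(u)}f(u)|\le\tildeG_*f(u)$, valid for all $u$, which gives $\|\tildeG_{\{a(\cdot)\}}f\|_{L^{1,\infty}(E)}\le\|\tildeG_*f\|_{L^{1,\infty}(E)}\le C\|f\|_{H^s}$, so one may take $\tilde C=C$.

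The converse direction ``$\Leftarrow$'' is where the real content lies. First I would reduce to controlling, for each $\lambda>0$ and each compact $K\subset E$ with $|\{u\in K:\tildeG_*f(u)>\lambda\}|$ close to the full superlevel-set measure, the quantity $\lambda\,|\{u\in K:\tildeG_*f(u)>\lambda\}|$. For $u$ in this set, by definition of the supremum pick $a(u)\in(a_0,a_0+\delta)$ with $|\tildeG_{a(u)}f(u)|>\lambda/2$ (or $>\lambda$ if one prefers to work with an exact sup over a countable dense set of $a$'s, invoking continuity of $a\mapsto\tildeG_af(u)$ for $f\in\mathcal S$, then extending by density). The map $u\mapsto a(u)$ can be chosen measurable: since $a\mapsto\tildeG_af(u)$ is continuous for Schwartz $f$ and jointly measurable, the set $\{(u,a):|\tildeG_af(u)|>\lambda/2\}$ is measurable with nonempty (open) sections over the relevant $u$'s, so a measurable selection exists (e.g.\ by taking $a(u)$ to be the infimum of a suitable rational-indexed exhaustion). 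With this $a(\cdot)$ fixed, $\{u\in K:\tildeG_*f(u)>\lambda\}\subset\{u:|\tildeG_{\{a(\cdot)\}}f(u)|>\lambda/2\}$, hence
\[
\lambda\,\bigl|\{u\in K:\tildeG_*f(u)>\lambda\}\bigr|\le 2\cdot\tfrac\lambda2\,\bigl|\{u:|\tildeG_{\{a(\cdot)\}}f(u)|>\tfrac\lambda2\}\bigr|\le 2\sup_{\{a(\cdot)\}}\|\tildeG_{\{a(\cdot)\}}f\|_{L^{1,\infty}(E)}\le 2\tilde C\|f\|_{H^s}.
\]
Taking the supremum over $\lambda>0$ and exhausting $E$ by compacta $K$ yields $\|\tildeG_*f\|_{L^{1,\infty}(E)}\le 2\tilde C\|f\|_{H^s}$ for $f\in\mathcal S(\mathbb R)$, and then by density (using that $\tildeG_*$ is, as a consequence, a bounded operator) for all $f\in H^s(\mathbb R)$.

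I expect the main obstacle to be the measurable-selection step: one must be careful that $\tildeG_{\{a(\cdot)\}}f$ is a well-defined measurable function of $u$ for the chosen $a(\cdot)$, and that the supremum defining $\tildeG_*f$ may legitimately be replaced by a supremum over a countable set (so that $\tildeG_*f$ itself is measurable and the ``$\varepsilon$ of room'' in picking $a(u)$ is harmless). Both points are handled by first working with $f\in\mathcal S(\mathbb R)$, where $a\mapsto\tildeG_af(u)$ is continuous (this follows from Lemma~\ref{ExpressionOnTime2} and dominated convergence) so the sup over $(a_0,a_0+\delta)$ equals the sup over $(a_0,a_0+\delta)\cap\mathbb Q$, and then passing to general $f\in H^s$ by density once the uniform weak-type bound is in hand; the remaining quantifier bookkeeping (the constants differ only by a harmless factor of $2$) is routine.
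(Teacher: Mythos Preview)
Your proof is correct and is precisely the standard Kolmogorov--Seliverstov--Plessner linearization argument. The paper itself does not supply a proof of this lemma: it is stated without proof, the only justification being the sentence preceding it (``we use the Kolmogorov--Seliverstov--Plessner method (cf.~\cite{Carleson1980})''). So your write-up fills in what the paper leaves as a citation; the approach is the intended one, and there is nothing to compare.
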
 
\medskip

As in \cite{Carleson1980} 
we will need the following integral estimate.
\medskip

\begin{lemma}\label{LemmaIntegral}
    There exists a constant $C_0>0$ independent of $a,b\in\mathbb R$, such that for any $N>\max(\frac{1}{\sqrt{|b|}},\frac{1}{|b|})$, we have 
    $$\left|\int_{-N}^{N}(1+\xi^2)^{-\frac{1}{4}}e^{i(a\xi^2+b\xi)}\ d\xi\right|\leqslant C_0(|a|^2+|b|^2)^{-\frac{1}{4}}.$$
\end{lemma}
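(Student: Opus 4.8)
The plan is to reduce the estimate to a standard stationary-phase/van der Corput analysis of the quadratic-phase integral $\int_{-N}^N (1+\xi^2)^{-1/4} e^{i(a\xi^2+b\xi)}\,d\xi$, treating separately the regimes where the two terms of the phase dominate. Write $\phi(\xi)=a\xi^2+b\xi$, so $\phi'(\xi)=2a\xi+b$, with the (possible) critical point at $\xi_*=-b/(2a)$. First I would dispose of the region $|\xi|\le 1$ by the trivial bound, which contributes $O(1)$; since $(|a|^2+|b|^2)^{-1/4}$ can be large this is harmless only after we also check the case where $|a|,|b|$ are both bounded below — but in fact when $|a|^2+|b|^2 \lesssim 1$ the desired right-hand side is bounded below by a constant and the whole integral is $O(1)$ by... no: the integral is only $O(N^{1/2})$ a priori, so we genuinely need oscillation even for small $a,b$. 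The cleaner route: split at $|\xi|=1$ and on $|\xi|\ge 1$ replace the weight by $|\xi|^{-1/2}$, reducing matters to bounding $\int_{1\le|\xi|\le N} |\xi|^{-1/2} e^{i(a\xi^2+b\xi)}\,d\xi$ with a bound $\lesssim (|a|^2+|b|^2)^{-1/4}$, plus verifying that the $|\xi|\le 1$ piece is also $\lesssim (|a|^2+|b|^2)^{-1/4}$ under the hypothesis $N>\max(|b|^{-1/2},|b|^{-1})$ — this hypothesis forces $|b|$ (hence $|a|^2+|b|^2$) to be bounded below whenever $N$ is bounded, which is exactly what makes the $O(1)$ piece acceptable.

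For the main piece, I would use the second-derivative van der Corput lemma when $|a|$ is comparable to or larger than $|b|$: since $\phi''\equiv 2a$, one gets $|\int_I |\xi|^{-1/2} e^{i\phi}\,d\xi| \lesssim |a|^{-1/2}$ on any dyadic interval after summing a geometric-type series in the weight, giving $\lesssim |a|^{-1/2} \lesssim (|a|^2+|b|^2)^{-1/4}$ in the regime $|a|\gtrsim |b|$. When instead $|b|\gtrsim |a|$, the phase derivative $\phi'(\xi)=2a\xi+b$ is bounded below by $\sim|b|$ away from a neighborhood of $\xi_*$ of length $\sim |b|/|a|$; integrating by parts on the region $\{|\phi'|\gtrsim |b|\}$ yields a gain of $|b|^{-1}$ times boundary and weight-derivative terms, which one checks is $\lesssim (|a|^2+|b|^2)^{-1/4}$ because the weight is integrable near the relevant scales and $|b|^{-1}\le |b|^{-1/2}\cdot|b|^{-1/2}$; on the excluded neighborhood of $\xi_*$ one uses either the trivial bound $|\xi_*|^{-1/2}\cdot(|b|/|a|)$ (when $|\xi_*|\ge 1$) or van der Corput with $\phi''=2a$ over an interval of length $|b|/|a|$, both of which come out $\lesssim |b|^{-1/2}$. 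The constant $N>\max(|b|^{-1/2},|b|^{-1})$ enters to guarantee that the critical point and the relevant scales lie inside $[-N,N]$ so that no extra boundary loss occurs at $\pm N$ beyond what van der Corput already controls.

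The main obstacle I anticipate is bookkeeping the interaction between the $(1+\xi^2)^{-1/4}$ weight and the dyadic decomposition uniformly in both $a$ and $b$: van der Corput's lemma applies cleanly to a monotone weight times an oscillatory factor, but one must sum the resulting bounds $|a|^{-1/2}\cdot(\text{weight at scale }2^j)$ or $|b|^{-1}\cdot(\text{weight derivative})$ over all dyadic $j$ with $2^j\le N$ and confirm the sum telescopes or converges to something $\lesssim (|a|^2+|b|^2)^{-1/4}$ without ever picking up a spurious $\log N$ or $N^{\varepsilon}$. I would handle this by decomposing $[1,N]$ into the "stationary block" near $\xi_*$ and its complement, applying the first-derivative test (integration by parts) on the complement where $|\phi'|$ is monotone and large, and the second-derivative test on the stationary block, so that in each block only finitely many scales are relevant and the geometric series in the weight converges. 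A secondary subtlety is the genuinely small-$(a,b)$ case, which as noted is forced by the lower bound on $N$ to coincide with the $(|a|^2+|b|^2)^{-1/4}\gtrsim 1$ regime, so the crude $O(1)$ bound suffices there.
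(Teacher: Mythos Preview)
Your plan is essentially the paper's: obtain the two uniform bounds $\lesssim|a|^{-1/2}$ (second--derivative van der Corput) and $\lesssim|b|^{-1/2}$ (Carleson's oscillatory--integral estimate, which you unpack via the near/far--from--$\xi_*$ decomposition), then take the minimum to reach $(|a|^2+|b|^2)^{-1/4}$. The paper simply quotes \cite[Ch.~8]{stein1993harmonic} for both bounds and ``combines''; you are spelling out the same argument in more detail.

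One paragraph of your proposal is garbled and you should fix it before writing things up. You claim the hypothesis $N>\max(|b|^{-1/2},|b|^{-1})$ ``forces $|b|$ (hence $|a|^2+|b|^2$) to be bounded below whenever $N$ is bounded, which is exactly what makes the $O(1)$ piece acceptable.'' This is backwards on two counts. First, $N$ is not bounded in the statement, so nothing is being forced about $|b|$. Second, even if $|a|^2+|b|^2$ were bounded \emph{below}, that would make the target $(|a|^2+|b|^2)^{-1/4}$ bounded \emph{above}, so a crude $O(1)$ bound on the $|\xi|\le 1$ piece would \emph{not} be acceptable. The correct handling is simpler: the weight $(1+\xi^2)^{-1/4}$ has total variation $\int_{\mathbb R}\bigl|\tfrac{d}{d\xi}(1+\xi^2)^{-1/4}\bigr|\,d\xi<\infty$, so the van der Corput corollary applies directly on all of $[-N,N]$ (no split at $|\xi|=1$ needed) to give the $|a|^{-1/2}$ bound; for the $|b|^{-1/2}$ bound, when $|b|\le 1$ the right-hand side is $\ge 1$ and the trivial bound on $|\xi|\le 1$ is fine, while when $|b|\ge 1$ (and $|a|\ll|b|$) you have $|\phi'|\gtrsim|b|$ on $[-1,1]$ and the first--derivative test gives $\lesssim|b|^{-1}\le|b|^{-1/2}$.

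Finally, the actual role of the hypothesis $N>\max(|b|^{-1/2},|b|^{-1})$ is not the one you describe: it is used in the $|b|^{-1/2}$ estimate to control the boundary term at $\xi=\pm N$ after integration by parts, since $|\psi(N)/\phi'(N)|\lesssim N^{-1/2}|b|^{-1}$, and this is $\le|b|^{-1/2}$ precisely when $N\ge|b|^{-1}$. With these corrections your outline goes through.
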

\begin{proof}
By \cite[Ch. 8]{stein1993harmonic}, when $b=0$, $\exists C_1>0$, such that for $N>0$, we have 
$$|\int_{-N}^{N}(1+\xi^2)^{-\frac{1}{4}}e^{ia\xi^2}\ d\xi|\leqslant C_1|a|^{-\frac{1}{2}};$$ 
when $b\ne0$, $\exists C_2>0$, such that for $N>\max(\frac{1}{\sqrt{|b|}},\frac{1}{|b|})$, we have 
$$|\int_{-N}^{N}(1+\xi^2)^{-\frac{1}{4}}e^{i(a\xi^2+b\xi)}\ d\xi|\leqslant C_2|b|^{-\frac{1}{2}}.$$ 
Combining the above estimates, 
the desired bound follows. 
\end{proof}

~\\
We are now ready to prove Proposition \ref{a.e.}. 

\begin{proof}[Proof of Proposition \ref{a.e.}]
By Lemma \ref{lem:1} and Lemma \ref{euqivlemma}, it suffices to show that there exists $p\geqslant1$, such that for any bounded interval $[c,d]\subset \mathbb{R},\ \exists C>0$, such that 
\begin{equation}
    \sup\limits_{\{a(\cdot)\}}\|\tildeG_{a(\cdot)} f\|_{L^p [c,d]}\leqslant C\|f\|_{H^{\frac{1}{4}} (\mathbb{R})}.\label{Localboundedness}
\end{equation}
$\forall f\in C^\infty_{0}(\mathbb{R})$, we have $\|\tildeG_{a(u)}f\|_{L^p[c,d]}=\sup\limits_{\substack{g\in C^\infty_{0}(\mathbb{R})\\\|g\|_{L^{p'}[c,d]}\leqslant1}}\langle\tildeG_{a(u)}f,g\rangle$, where $\frac{1}{p}+\frac{1}{p'}=1$. By Fubini's theorem,
\begin{align*}
    \langle\tildeG_{a(u)}f,g\rangle=&\int_c^d\int_{\mathbb{R}}\hat{f}(\xi)e^{i(a(u)\xi^2+b(a(u))u\xi)}\ d\xi\ g(u)\ du\\
    =&\lim\limits_{N\rightarrow+\infty}\int_{-N}^N\Big[(1+\xi^2)^{\frac{s}{2}}\hat{f}(\xi)\Big](1+\xi^2)^{-\frac{s}{2}}\big[\int_c^d e^{i(a(u)\xi^2+b(a(u))u\xi)} g(u)\ du\big]\ d\xi\\
    =&\lim\limits_{N\rightarrow+\infty}\int_{-N}^N(1+\xi^2)^{\frac{s}{2}}\hat{f}(\xi)\times\\
    \Big[&(1+\xi^2)^{-s}\int_c^d\int_c^d g(u_1)\overline{g(u_2)}e^{i[(a(u_1)-a(u_2))\xi^2+(b(a(u_1))u_1-b(a(u_2))u_2)\xi]}\ du_1 du_2\Big]^{\frac{1}{2}}d\xi\\
    \leqslant&\|f\|_{H^{\frac{1}{4}} (\mathbb{R})}\sup\limits_{N}\Big\Vert\Big[(1+(\cdot)^2)^{-s}\int_c^d\int_c^d g(u_1)\overline{g(u_2)}e^{i[(a_1-a_2)(\cdot)^2+(b_1 u_1-b_2 u_2)(\cdot)]}\ du_1 du_2\Big]^{\frac{1}{2}}\Big\Vert_{L^2[-N,N]},
\end{align*}
where for simplicity, let $a_i\triangleq a(u_i),\ b_i\triangleq b(a(u_i))$. By Fubini's theorem, 
\begin{align*}
    &\Big\Vert\Big[(1+(\cdot)^2)^{-s}\int_c^d\int_c^d g(u_1)\overline{g(u_2)}e^{i[(a_1-a_2)(\cdot)^2+(b_1 u_1-b_2 u_2)(\cdot)]}\ du_1 du_2\Big]^{\frac{1}{2}}\Big\Vert^2_{L^2[-N,N]}\\
    =&\int_c^d\int_c^d g(u_1)\overline{g(u_2)}\int_{-N}^{N}(1+\xi^2)^{-s} e^{i[(a_1-a_2)\xi^2+(b_1 u_1-b_2 u_2)\xi]}\ d\xi du_1 du_2
\end{align*}

Now estimate $K(u_1,u_2)\triangleq\int_{-N}^{N}(1+\xi^2)^{-s} e^{i[(a_1-a_2)\xi^2+(b_1 u_1-b_2 u_2)\xi]}\ d\xi$. Divide $[c,d]^2$ into several parts:\\
\textbf{\uppercase\expandafter{\romannumeral1}}\quad When $(u_1,u_2)\in\{[c,d]^2:a_1=a_2,\ b_1 u_1\ne b_2 u_2\}$, by Lemma \ref{LemmaIntegral},\ $|K(u_1,u_2)|\leqslant C_0|b_1 u_1-b_2 u_2|^{-\frac{1}{2}}$. Since\ $a_1=a_2\Rightarrow\ b_1=b_2$, $$|b_1 u_1-b_2 u_2|^{-\frac{1}{2}}=|b_1|^{-\frac{1}{2}} |u_1-u_2|^{-\frac{1}{2}}\leqslant C|u_1-u_2|^{-\frac{1}{2}}.$$
\textbf{\uppercase\expandafter{\romannumeral2}}\quad When $(u_1,u_2)\in\{[c,d]^2:a_1\ne a_2,\ b_1 u_1=b_2 u_2\}$, by Lemma \ref{LemmaIntegral},\ $|K(u_1,u_2)|\leqslant C_0|a_1-a_2|^{-\frac{1}{2}}$. Since $b$ is Lipschitz continuous,$$|a_1-a_2|\geqslant C|b_1-b_2|=C|\frac{b_1}{u_2}u_2-\frac{b_2}{u_1}u_1|=C|\frac{b_1}{u_2}(u_1-u_2)|\geqslant C|u_1-u_2|.$$
\textbf{\uppercase\expandafter{\romannumeral3}}\quad When $(u_1,u_2)\in\{[c,d]^2:a_1\ne a_2,\ b_1 u_1\ne b_2 u_2\}$, for simplicity, let $b_1 u_1\triangleq\frac{1}{\epsilon_1},\ b_2 u_2\triangleq\frac{1}{\epsilon_2}$. Since $|a_1-a_2|\geqslant C|b_1-b_2|=C|\frac{1}{\epsilon_1 u_1}-\frac{1}{\epsilon_2 u_2}|$, by Lemma \ref{LemmaIntegral}, we get
$$|K(u_1,u_2)|\leqslant C_0\Big(|\frac{1}{\epsilon_1}-\frac{1}{\epsilon_2}|+|\frac{1}{\epsilon_1 u_1}-\frac{1}{\epsilon_2 u_2}|\Big)^{-\frac{1}{2}}.$$
Since $\Big(|\frac{1}{\epsilon_1}-\frac{1}{\epsilon_2}|+|\frac{1}{\epsilon_1 u_1}-\frac{1}{\epsilon_2 u_2}|\Big)^2 \leqslant C_0\epsilon^2 \Big[u_1^2 u_2^2\ (\frac{\epsilon_2}{\epsilon_1}-1)^2+(\frac{\epsilon_2 u_2}{\epsilon_1}-u_1)^2\Big]$, now consider $u_1^2 u_2^2\ (\frac{\epsilon_2}{\epsilon_1}-1)^2+(\frac{\epsilon_2 u_2}{\epsilon_1}-u_1)^2$ as a function w.r.t. $\frac{\epsilon_2}{\epsilon_1}$, and we have $$\Big(|\frac{1}{\epsilon_1}-\frac{1}{\epsilon_2}|+|\frac{1}{\epsilon_1 u_1}-\frac{1}{\epsilon_2 u_2}|\Big)^2\geqslant C_0 (\epsilon_1 u_1)^2 (u_1-u_2)^2\geqslant C_0(u_1-u_2)^2.$$
\textbf{\uppercase\expandafter{\romannumeral4}}\quad When $(u_1,u_2)\in\{[c,d]^2:a_1=a_2,\ b_1 u_1=b_2 u_2\}$, by \textbf{\uppercase\expandafter{\romannumeral3}}, we know that $$(|a_1-a_2|+|b_1 u_1-b_2 u_2|)^2\geqslant C_0(u_1-u_2)^2.$$
So $\{[c,d]^2:a_1=a_2,\ b_1 u_1=b_2 u_2\}\subset  \{[c,d]^2:u_1=u_2\}$ be a zero measure. Without loss of generality, we can suppose that $a_1=a_2$ and $b_1 u_1=b_2 u_2$ do not hold at the same time.

To sum up, we have $\exists N_0>0,\ \forall N>N_0,$
\begin{align*}
    &\Big|\int_c^d\int_c^d g(u_1)\overline{g(u_2)}K(u_1,u_2)\ du_1 du_2\Big|\\
    \leqslant& C\int_c^d\int_c^d |g(u_1)g(u_2)|\ |u_1-u_2|^{-\frac{1}{2}}\ du_1 du_2\\
    \leqslant& C\langle H*|g|,|g|\rangle\qquad(H(u_1)\triangleq|u_1|^{-\frac{1}{2}})\\
    \leqslant& C\|H*|g|\|_{L^{p}[c,d]}\|g\|_{L^{p'}[c,d]}\\
    \leqslant&C\|H\|_{L^{\frac{p}{2},\infty}[c,d]} \|g\|^2_{L^{p'}[c,d]}\\
    \leqslant&C\|H\|_{L^{\frac{p}{2},\infty}[c,d]},\ \text{where}\ C\ \text{is independent of}\ N.
\end{align*}
In the second last line, we have used Young's inequality on weak type spaces (cf. \cite[Ch. 1]{grafakos2008classical}). Now all we need is $\|H\|_{L^{\frac{p}{2}},\infty}<\infty$, that is, $p\leqslant4$. It follows that \eqref{Localboundedness} holds for $p\le 4$. 
This completes the proof of Proposition \ref{a.e.}. 
\end{proof}

\begin{remark}
    The condition $p\le 4$ is optimal for \eqref{Localboundedness} to hold. This follows from the sharpness of the Sobolev inequality: 
    $$\|f\|_{L^p [c,d]}=c\|L_0 f\|_{L^p [c,d]}    \leqslant c\|\tildeG_* f\|_{L^p [c,d]}\leqslant C\|f\|_{H^{\frac{1}{4}} (\mathbb{R})},\quad p\le 4.$$
\end{remark}

Now we show that the almost-everywhere convergence in Proposition \ref{a.e.} does not always hold if $s<\frac{1}{4}$. 
As in \cite[p. 208]{DahlbergKenig1980}, the proof reduces to the following proposition. 
\medskip

\begin{proposition}\label{necessitya.e.}
    Suppose $b(a)$ is continuous on $[a_0,a_0+\delta)$. If there exists a set $E$ of positive measure such that 
    $$\|\tildeG_* f\|_{L^{1,\infty}(E)} \leqslant C\|f\|_{H^s (\mathbb{R})},$$ 
    then $s\geqslant\frac{1}{4}.$
\end{proposition}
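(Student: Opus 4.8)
The plan is to establish the necessity of $s \ge 1/4$ by exhibiting a family of test functions concentrated at high frequency and showing that the claimed weak-type estimate forces a lower bound on $s$. The standard approach, following Dahlberg--Kenig, is to take $\hat f_N$ supported in a frequency annulus of size $\sim N$ (so that $\|f_N\|_{H^s(\mathbb{R})} \sim N^s$ after suitable normalization) and to choose, for each point $u$ in a fixed compact piece of $E$, a value $a = a(u)$ depending on $N$ that makes the oscillatory integral defining $\tildeG_{a(u)} f_N(u)$ add up coherently rather than cancel. Concretely, I would take $\hat f_N(\xi) = \chi_{[N, 2N]}(\xi)$ (or a smooth bump adapted to that interval), so that $\tildeG_a f_N(u) = \frac{1}{\sqrt{2\pi}} \int_N^{2N} e^{i(a\xi^2 + b(a) u \xi)}\, d\xi$.

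The key step is the stationary-phase heuristic: for fixed $u$ with $|u|$ bounded away from $0$, pick $a = a(u,N)$ so that the phase $\varphi(\xi) = a\xi^2 + b(a) u \xi$ has its critical point $\xi^* = -b(a)u/(2a)$ inside $[N,2N]$; since $b(a) \to b(a_0)$ is bounded and bounded below near $a_0$, this requires $a \sim 1/N$, which lies in $(a_0, a_0+\delta)$ once $a_0 = 0$ (the relevant normalization, as in \eqref{eq:Lalpha-limit}) and $N$ is large. With the critical point in the interval, the second derivative $\varphi''(\xi) = 2a \sim 1/N$, so the method of stationary phase gives $|\tildeG_a f_N(u)| \gtrsim |a|^{-1/2} \sim N^{1/2}$ — provided the interval $[N,2N]$ is long enough to contain a full stationary-phase neighborhood of width $\sim |a|^{-1/2} \sim N^{1/2}$, which it is since $N \gg N^{1/2}$. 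Letting $a(\cdot)$ be this choice of function on the compact piece of $E$, we get $\tildeG_* f_N(u) \ge |\tildeG_{a(u)} f_N(u)| \gtrsim N^{1/2}$ for all such $u$, so $\|\tildeG_* f_N\|_{L^{1,\infty}(E)} \gtrsim N^{1/2} |E_0|$ where $E_0 \subset E$ is the compact piece (of positive measure by inner regularity) on which $|u|$ is bounded below. On the other hand $\|f_N\|_{H^s(\mathbb{R})} \sim N^{s + 1/2}$ (the $N^{1/2}$ from the measure of the support, the $N^s$ from the weight). Plugging into the hypothesized inequality yields $N^{1/2} \lesssim N^{s+1/2}$, i.e. $N^{-s} \lesssim 1$ for all large $N$, which forces $s \ge 0$; to squeeze out $s \ge 1/4$ one must normalize more carefully — the sharp version uses $\hat f_N$ supported on an interval of length $\sim \sqrt{N}$ centered at $N$, giving $\|f_N\|_{H^s} \sim N^{s} N^{1/4}$, while the stationary phase lower bound with $a \sim 1/N$ still yields $|\tildeG_a f_N(u)| \gtrsim N^{1/2} \cdot$ (fraction of the stationary window captured) $\sim$ a constant times $N^{1/2} \cdot N^{-1/4}\cdot N^{1/4}$... so the combinatorics of exactly matching interval length, the size of $a$, and the stationary window width is where I would be most careful.

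The main obstacle is precisely this bookkeeping: one must verify that the chosen $a(u)$ genuinely lies in the admissible range $(a_0, a_0+\delta)$ for all $u$ in the compact set $E_0$ simultaneously and for all large $N$, and that the stationary-phase lower bound is uniform in $u \in E_0$ (this uses the continuity and non-vanishing of $b$ near $a_0$, hence that $b(a(u)) \asymp 1$ uniformly). A secondary technical point is handling the endpoint contributions of the integral over $[N,2N]$: after subtracting the main stationary-phase term one is left with boundary terms of size $\lesssim 1/|\varphi'(N)| \sim 1$ and $1/|\varphi'(2N)|$, which are lower-order compared to $N^{1/2}$ and so do not spoil the lower bound, but this needs a line of van der Corput / integration-by-parts. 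Finally, to pass from the $L^{1,\infty}$ quasi-norm bound on $E$ to the bound on $E_0$ one uses monotonicity of the weak-norm under restriction, and to guarantee $|E_0| > 0$ one intersects $E$ with $\{|u| \ge \eta\}$ for small $\eta$ and uses that $E$ has positive measure. Once these uniformities are in place, the scaling inequality $N^{1/4} \lesssim N^{s+1/4}$ (in the correctly normalized setup) gives $s \ge 1/4$ upon letting $N \to \infty$, completing the proof.
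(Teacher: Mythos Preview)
Your strategy is the standard Dahlberg--Kenig one and is essentially the route the paper takes (the paper packages it via the modulation--translation identity $L_a(e^{icy}\Phi_N)(u)=e^{i\theta}L_a\Phi_N(u+2ac/b(a))$ rather than explicit stationary phase, but the mechanism is the same). The genuine gap is precisely where you flag it: you never finish the scaling that forces $s\ge 1/4$, and the expression ``$N^{1/2}\cdot N^{-1/4}\cdot N^{1/4}$'' is not a computation. Here is the missing step. With $\hat f_N=\chi_I$, $I=[N,N+\sqrt N]$, choose $a=a(u)$ so that $2aN+b(a)u=0$, i.e.\ the phase $\varphi(\xi)=a\xi^2+b(a)u\xi$ is stationary at $\xi=N$; this forces $|a|\sim|u|/N$. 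Over $I$ one has $|\varphi(\xi)-\varphi(N)|\le |a|(\sqrt N)^2\sim|u|$, which is $O(1)$ for $u$ in a fixed compact set $E_0\subset E$ with $|u|\asymp 1$ and $|E_0|>0$. Hence the integrand does not oscillate and $|L_{a(u)}f_N(u)|\ge c|I|=c\sqrt N$ on $E_0$. Since $\|f_N\|_{H^s}\sim N^{s+1/4}$, the assumed inequality gives $c\sqrt N\,|E_0|\le \|L_*f_N\|_{L^{1,\infty}(E)}\le C\,N^{s+1/4}$, whence $s\ge 1/4$. The source of your confusion is that for $|a|\sim 1/N$ the stationary window $|a|^{-1/2}\sim\sqrt N$ \emph{coincides} with $|I|$, so ``stationary phase'' and ``phase essentially constant'' are the same statement here; there is no extra factor to track.

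One further point you should address: you assume $a_0=0$, but the proposition is stated for general $a_0$, and with $a_0\ne 0$ the choice $a\sim 1/N$ is not admissible. The fix is to replace $\hat f_N$ by $e^{-ia_0\xi^2}\chi_I(\xi)$, which has the same modulus (hence the same $H^s$ norm) and makes the effective quadratic coefficient in $L_af_N$ equal to $a-a_0$; then the stationarity condition becomes $2(a-a_0)N+b(a)u=0$, solvable with $a-a_0\sim u/N\in(0,\delta)$ for large $N$ by continuity of $b$. After this adjustment your argument is complete.
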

\begin{proof}
For any $N>10$, take $\Phi\in C^\infty_{0}(\mathbb{R}),\ \Phi_N\triangleq N\Phi(Nx)$ such that $|\tildeG_{a_0}\Phi_N(x)|>\frac{N}{2}$ on $[-\frac{1}{N},\frac{1}{N}]$. By Theorem \ref{L2convergence}, we have $\lim\limits_{a\rightarrow a_0}|\tildeG_a \Phi_N (u)|=|\tildeG_{a_0}\Phi_N(u)|$ uniformly w.r.t. $u\in [-\frac{1}{N},\frac{1}{N}],\ i.e.\ \exists\epsilon_0<\frac{1}{50},\ \exists\delta>0,\ \forall a\in U(a_0,\delta),\ \forall u\in [-\frac{1}{N},\frac{1}{N}],\ |\tildeG_a \Phi_N (u)|>|\tildeG_{a_0}\Phi_N(u)|-\epsilon_0>\frac{N}{4}$. Since by calculation, $$\tildeG_a(e^{i\frac{a-a_0}{a}Ny} \Phi_N(y))(u)=e^{i\big(\frac{[(a-a_0)N]^2}{a}+\frac{b(a)u(a-a_0)}{a}N\big)}\tildeG_a \Phi_N(u+\frac{2(a-a_0)N}{b(a)}).$$
Then $\forall u\in E,\ \exists a=a(u)\in U(a_0,\delta),\ s.t.\ u=\frac{2(a-a_0)N}{b(a)},$ $$|\tildeG_a(e^{-i\frac{a-a_0}{a}Ny}\Phi_N(y))(u)|>|\tildeG_{a_0}\Phi_N(u-\frac{2(a-a_0)N}{b(a)})|-\epsilon_0>\frac{N}{4}.$$
This is because the range of $\frac{2(a-a_0)}{b(a)}N$ can be denoted by $[C_1 N,C_2 N]\ (C_1<C_2)$ by continuity of $b$, and we can choose $N$ at the beginning such that $E\subset [C_1 N,C_2 N]$. Now we have $\|\tildeG_* (e^{-i\frac{a-a_0}{a}N y}\Phi_N(y))\|_{L^{1,\infty}(E)}\geqslant\frac{N}{4}$. Since $\|e^{-i\frac{a-a_0}{a}N (\cdot)}\Phi_N(\cdot)\|_{H^s (\mathbb{R})}\leqslant C_0 N^{2s+\frac{1}{2}}$,  we get $$\frac{N}{4}\leqslant \|\tildeG_* (e^{-i\frac{a-a_0}{a}N y}\Phi_N(y))\|_{L^{1,\infty}(E)}\leqslant C\|e^{-i\frac{a-a_0}{a}N (\cdot)}\Phi_N(\cdot)\|_{H^s (\mathbb{R})}\leqslant CN^{2s+\frac{1}{2}}.$$ So $1\leqslant 2s+\frac{1}{2}\Rightarrow s\geqslant\frac{1}{4}.$
This completes the proof of Proposition \ref{necessitya.e.}.  
\end{proof}

\section{Global boundedness}
\label{sec:global-bd}
In this section we consider global boundedness of the maximal function $L_*f$. 
\medskip

\begin{proposition}
\label{prop:global}
    Suppose the range of $b(a)$,  $a\in(a_0,a_0+\delta)$ is a  set of positive measure. Then $$ \sup\limits_{\substack{f\in H^s (\mathbb{R})\\ f\ne0}}\frac{\|\tildeG_* f\|_{L^p(\mathbb{R})}}{\|f\|_{H^s (\mathbb{R})}} =+\infty$$
    for any $s>0$ and $p<\infty$. 
\end{proposition}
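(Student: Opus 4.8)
The plan is to refute every such bound directly, by producing for each fixed $s>0$ and $p<\infty$ a family $\{f_\tau\}_{\tau>0}\subset\mathcal S(\mathbb R)$ of \emph{frequency--band--limited} test functions with $\|f_\tau\|_{H^s}$ bounded but $\|\tildeG_*f_\tau\|_{L^p(\mathbb R)}\to\infty$ as $\tau\to\infty$. The mechanism I would exploit is that on band-limited inputs the operator $\tildeG_a=L_a$ is a pure dilation up to a small error: if $\widehat f$ is supported in $[-W,W]$ then, from \eqref{eq:Lalpha-limit}, Fourier inversion, and $|e^{iav^2}-1|\le|a|v^2$,
\[
\bigl|\tildeG_a f(u)-f(b(a)u)\bigr|=\Bigl|\tfrac1{\sqrt{2\pi}}\!\int_{\mathbb R}e^{ib(a)uv}\bigl(e^{iav^2}-1\bigr)\widehat f(v)\,dv\Bigr|\le \tfrac{W^2}{\sqrt{2\pi}}\Bigl(\sup_{a\in(a_0,a_0+\delta)}|a|\Bigr)\|\widehat f\|_{L^1},
\]
uniformly in $u\in\mathbb R$ and $a\in(a_0,a_0+\delta)$. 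Hence $\tildeG_*f$ is, up to this error, $\sup_{c}|f(c\,\cdot\,)|$ with $c$ running over the range of $b$, and for a bump $f$ sitting at distance $\tau$ from the origin this supremum smears the bump over a $u$-interval of length $\sim\tau$.

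I would fix once and for all a $g_0\in\mathcal S(\mathbb R)$ with $\widehat{g_0}\in C_c^\infty$ supported in $[-1,1]$, $\|g_0\|_{L^2}=1$ and $g_0(0)\ne0$, pick $\rho_1>0$ with $|g_0|\ge\tfrac12|g_0(0)|$ on $[-\rho_1,\rho_1]$, and set $g(t)=\sqrt W\,g_0(Wt)$, so that $\widehat g$ is supported in $[-W,W]$, $\|g\|_{L^2}=1$, and $|g|\ge\tfrac12\sqrt W|g_0(0)|$ on $[-\rho_0,\rho_0]$ with $\rho_0:=\rho_1/W$. Choosing $W$ once according to the single constraint $W^2\lesssim(\sup_a|a|)^{-1}$ (no dependence on $\tau,s,p$) makes the error in the display at most $\tfrac14\sqrt W|g_0(0)|$ when $f=g(\,\cdot-\tau\,)$, since $\|\widehat{g(\cdot-\tau)}\|_{L^1}=\|\widehat g\|_{L^1}$ does not depend on $\tau$. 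Setting $f_\tau:=g(\,\cdot-\tau\,)$, translation leaves the Fourier support in $[-W,W]$, so $\|f_\tau\|_{H^s}\le(1+W^2)^{s/2}\|f_\tau\|_{L^2}=(1+W^2)^{s/2}$ uniformly in $\tau$, while the two preceding displays give $|\tildeG_a f_\tau(u)|\ge\tfrac14\sqrt W|g_0(0)|$ whenever $|b(a)u-\tau|\le\rho_0$.

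Finally I would quantify the smearing. Since the range $\{b(a):a\in(a_0,a_0+\delta)\}$ has positive measure, after discarding the point $0$ and intersecting with a suitable compact interval $[\beta_1,\beta_2]$ (which we may take inside $(0,\infty)$, the case $[\beta_1,\beta_2]\subset(-\infty,0)$ being handled identically) the set $\mathcal B:=\{b(a):a\in(a_0,a_0+\delta)\}\cap[\beta_1,\beta_2]$ has positive measure $m'$. For each $c\in\mathcal B$ choose $a$ with $b(a)=c$; at $u=\tau/c$ we have $b(a)u-\tau=0$, whence $\tildeG_*f_\tau(u)\ge\tfrac14\sqrt W|g_0(0)|$. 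Thus $\tildeG_*f_\tau\ge\tfrac14\sqrt W|g_0(0)|$ on $\{\tau/c:c\in\mathcal B\}$, and since $c\mapsto\tau/c$ is a diffeomorphism of $[\beta_1,\beta_2]$ with $|(\tau/c)'|=\tau/c^2\ge\tau\beta_2^{-2}$, this set has measure $\ge\beta_2^{-2}\tau m'$. Therefore $\|\tildeG_*f_\tau\|_{L^p(\mathbb R)}^p\ge\bigl(\tfrac14\sqrt W|g_0(0)|\bigr)^p\beta_2^{-2}\tau m'$, and dividing by $\|f_\tau\|_{H^s}\le(1+W^2)^{s/2}$ yields a ratio $\gtrsim\tau^{1/p}\to\infty$. (When $b\equiv1$ the set $\mathcal B$ is empty and the construction collapses --- as it must, since then $L_*:H^{1/4}\to L^4$ is bounded.)

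The main obstacle is the first step: one must ensure the dilation approximation holds with an error that is small \emph{uniformly} in $u$ and $a$ and that can be absorbed by a \emph{single} bandwidth $W$ chosen independently of $\tau$, $s$ and $p$; this decoupling is exactly what allows $\tau$ to push $\|\tildeG_*f_\tau\|_{L^p}$ to infinity while band-limitedness neutralizes $s$ and $p$ enters only through its finiteness. The remaining pieces --- the $\tau$-uniform $H^s$ bound and the passage from ``the range of $b$ has positive measure'' to the quantitative estimate $|\{\tau/c:c\in\mathcal B\}|\gtrsim\tau$ via $c\mapsto\tau/c$ --- are routine, modulo the standard (and harmless) measurability bookkeeping for the supremum defining $\tildeG_*$.
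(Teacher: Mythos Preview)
Your proof is correct and follows essentially the same approach as the paper: both construct a band-limited function translated far from the origin (the paper uses $\widehat{f_0}=e^{iN\xi}\chi_I$ for a fixed interval $I$, you use $f_\tau=g(\cdot-\tau)$ with $\widehat g\in C_c^\infty$), observe that $L_a$ acts on such inputs essentially as the dilation $f\mapsto f(b(a)\,\cdot\,)$, and then use the positive-measure range of $b$ to spread the bump over a $u$-set of measure $\sim\tau$ (resp.\ $\sim N$) while the $H^s$ norm stays bounded. Your version makes the dilation approximation explicit via the error bound $|L_af(u)-f(b(a)u)|\lesssim W^2(\sup|a|)\|\widehat f\|_{L^1}$, whereas the paper computes $L_af_0(u)$ directly at the point where $b(a)u+N=0$; this is a cosmetic difference only.
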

\begin{proof}
The proof is an adaptation of that of \cite[Theorem 3]{Sjoegren2010}. 
By Lemma \ref{euqivlemma}, it suffices to show that $\forall L>0,\ \exists f_0\in H^s (\mathbb{R}),\ \exists a(u),\ \|\tildeG_{a(\cdot)} f_0\|_{L^2(\mathbb{R})} >L\|f_0\|_{H^s (\mathbb{R})}$. Without loss of generality, suppose $a_0+\delta\ne0$. Take $\hat{f}_0(\xi)=e^{iN\xi}\chi_{U(0,\frac{1}{\sqrt{a_0+\delta}})}(\xi)$. $\forall u\in F\triangleq\{-\frac{N}{x}:\ \forall x\in E\},\ \exists\ \tilde{a}(u)\in(a_0,a_0+\delta), s.t.\ b(\tilde{a}(u))u+N=0.$
\begin{align*}
    |\tildeG_{\tilde{a}(u)}f_0(u)|&=|\int_{-\frac{1}{\sqrt{a_0+\delta}}}^{\frac{1}{\sqrt{a_0+\delta}}} e^{i\tilde{a}(u)\xi^2}\ d\xi|\\
    &>|\frac{2}{\sqrt{\tilde{a}(u)}}\int_{0}^{\sqrt{\frac{\tilde{a}(u)}{a_0+\delta}}}\cos \eta^2\ d\eta|\\
    &\geqslant 2\cos1. 
\end{align*}
Since $0<|E|<\infty$, there exists $C_0>0,\ s.t.\ |F|=C_0 N|E|=C_0 N$, then $$\|\tildeG_{\tilde{a}(\cdot)} f_0\|_{L^p(\mathbb{R})}\geqslant(\int_F |2\cos1|^p du)^{\frac{1}{p}}\geqslant C_0 N^{\frac{1}{p}}.$$ If $N$ is chosen large enough, then $\sup\limits_{\{a(\cdot)\}}\|\tildeG_{a(\cdot)} f_0\|_{L^2(\mathbb{R})} >L\|f_0\|_{H^s (\mathbb{R})},$ 
as desired. 
This completes the proof of Proposition \ref{prop:global}. 
\end{proof}

\bibliographystyle{ieeetr}

\end{document}